\documentclass[review,amsmath]{elsarticle}

\usepackage{amsmath,amsthm,amssymb,subeqnarray,amsfonts,graphics,color}
\usepackage{amsmath,amsthm,amssymb}
\usepackage{graphicx,subfigure}
\usepackage{tikz}
\usepackage{epstopdf}

\usepackage{setspace}
\usepackage{stmaryrd}
\usepackage{cases}
\usepackage{exscale}
\usepackage{relsize}

\usepackage{graphicx}
\usepackage{float}

\usepackage{booktabs}
\usepackage{threeparttable}
\usepackage{multirow}

\usepackage{algorithm}
\usepackage{algpseudocode}
\usepackage{amsmath}
\allowdisplaybreaks[4]

\usepackage[footnotesize]{caption2}

\numberwithin{equation}{section}
\numberwithin{table}{section}
\numberwithin{figure}{section}

\newtheorem{theorem}{Theorem}[section]

\newtheorem{remark}{Remark}[section]
\newtheorem{lemma}{Lemma}[section]

\biboptions{numbers,sort&compress}

\journal{}
\begin{document}

\begin{frontmatter}
\title{The compactness of Moser-Trudinger functionals with conical metric in the unit ball of $\mathbb{R}^N$}
\tnotetext[t1]{ This work is partially supported by the NSF of China under grant Nos. 12001466 and  U19A2079.}

\author[a,b]{Qi Xia \corref{cor1}}
\ead{xq19991231@mail.dlut.edu.cn}

\address[a]{School of Mathematical Sciences,
Dalian University of Technology, Dalian, 116024, China}

\address[b]{DUT-BSU Joint Institute,
Dalian University of Technology, Dalian, 116024, China}

\cortext[cor1]{Corresponding author}

\begin{abstract}
Let $\mathbb{B}$ be the unit ball in $\mathbb{R}^N$, $W_0^{1,N} \left( \mathbb{B} \right)$ is a standard Sobolev space. Zhang proved the extremal function of the Moser-Trudinger inequality as follows,  
\begin{flalign*}
	\int_{ \mathbb{B}} h_{ \epsilon} (x) e^{ \alpha_N \left( 1 + \epsilon \right) |u_{\epsilon}|^{ \frac{N}{N-1} } } dx, ~ u_{\epsilon} \in W_0^{1,N} \left(  \mathbb{B} \right) \cap \mathcal{S},
\end{flalign*}
where $\alpha_N = \omega_N^{ \frac{1}{N-1} }$, $\omega_N $ is the area of the unit sphere in $\mathbb{R}^N$(see \citep{26}) .
In this paper, we consider the compactness of the sequence $\{ u_{\epsilon} \}_{\epsilon} $ and prove that it has a subsequence converging to a function in $C^1 \left(\overline{ \mathbb{B}} \right)$. 
\end{abstract}

\begin{keyword}
Moser-Trudinger Inequality, conical metric, blow-up analysis, compactness
\end{keyword}

\end{frontmatter}

\section{Introduction}
Let $\Omega$ be a smooth bounded domains in $\mathbb{R}^N$, and $ W_0^{1,N} \left( \Omega \right)$ is a standard Sobolev space with norm
\begin{flalign*}
	|| \cdot ||_{W_0^{1,N} \left( \Omega \right) } := \left( \int_{\Omega} |\nabla \cdot|^N dx \right)^{ \frac{1}{N} },
\end{flalign*}
where $\nabla$ is the gradient operator. Then the classical Moser-Trudinger is
\begin{flalign*}
	\underset{u \in W_0^{1,N} \left( \Omega \right) \setminus \{0\}, ~ ||u||_{ W_0^{1,N} \left( \Omega \right) } \leq 1 }{\sup} \int_{\Omega} e^{ \alpha |u|^{ \frac{N}{N-1} } } dx < \infty.
\end{flalign*} 
For more details and  the origin of the Moser-Trudinger inequality, see \citep{20,21}. This inequality is sharp in the sense that, for every $\alpha > \alpha_N$, all integrals above are finite, but the supremum is not, for instance, \cite{17}, while the existence of extremal functions was established in \citep{4,5}.

As $\epsilon \rightarrow 0^+$, the following result holds from \citep{6},
\begin{flalign}
	\underset{u \in W_0^{1,2} \left( \mathbb{B} \right) \bigcap \mathcal{S} \setminus \{0 \}, ||u||_{ W_0^{1,2} \left( \mathbb{B} \right) } \leq 1 }{\sup} \int_{\mathbb{B}} |x|^{ 2 \epsilon } e^{4 \pi \left( 1 +\epsilon \right) u^2 } dx < \infty, \label{eq: 1.1}
\end{flalign}
where $\mathcal{S} $ denotes the set of all radially symmetric functions. Also, Calanchi, Terraneo, et al. \cite{7,8,16}  proved the existence of extremal functions $ u_{\epsilon} \in W_0^{1,2} \left( \mathbb{B} \right) \bigcap \mathcal{S}$, namely,
\begin{flalign}
	\underset{u \in W_0^{1,2} \left( \mathbb{B} \right) \bigcap \mathcal{S} \setminus \{0 \}, ||u||_{ W_0^{1,2} \left( \mathbb{B} \right) } \leq 1 }{\sup} \int_{\mathbb{B}} |x|^{ 2 \epsilon } e^{4 \pi \left( 1 + \epsilon \right) u^2 } dx = \int_{\mathbb{B}} |x|^{ 2 \epsilon } e^{4 \pi \left( 1 + \epsilon \right) u_{\epsilon}^2 } dx. \label{eq: 1.2}
\end{flalign}

Yang and Zhu extended equation \eqref{eq: 1.1} to high dimensions in \citep{1}.
Equation \eqref{eq: 1.2} also holds in high dimensions, as shown in \citep{2}. In particular,  the following embedding is continuous,  $W_0^{1,N} \left( \mathbb{B}\right) \cap \mathcal{S} \hookrightarrow L^p \left( \mathbb{B}, ~ |x|^{\alpha} \right)$ for $ \alpha >0$ and $p = 2 \frac{N + \alpha}{N-2}$. In \citep{26}, Zhang proved that there exists a function that attains the supremum of the functional,
\begin{flalign}
	\int_{\mathbb{B}} h_{\epsilon} (x) e^{ 4 \pi ( 1+ \epsilon) u^2 } dx, \label{eq: 1.5}
\end{flalign}
where $h_{\epsilon}(x)$ satisfies $\underset{\epsilon \rightarrow 0^+}{\lim} h_{\epsilon}(x) |x|^{ -2 \epsilon} =1$. 

Recently, Yang and Wang considered the compactness of extremals for critical singular Trudinger-Moser functionals in \cite{18} in the case $\epsilon < 0$. Meanwhile, this work also provided a more delicate method than that in \cite{19} to analyze the behavior of asymptotic symmetry. In \cite{3}, Shan and Li considered the Moser-Trudinger functional with the conical metric in the unit ball of $\mathbb{R}^2$. They also obtained that for any $\epsilon >0$, there exist radially symmetric functions $u_{\epsilon}$ satisfying
\begin{flalign*}
	\int_{\mathbb{B} } |x|^{N \epsilon} e^{ \alpha_N \left( 1+ \epsilon \right) |u_{\epsilon}|^{ \frac{N}{N-1} } } dx = \underset{ u \in W_0^{1,N} \left( \mathbb{B} \right) \cap \mathcal{S} \setminus \{0\}, ~ ||u||_{ W_0^{1,N} \left( \mathbb{B} \right) } \leq 1 }{\sup} \int_{\mathbb{B} } |x|^{N \epsilon} e^{ \alpha_N \left( 1+ \epsilon \right) |u|^{ \frac{N}{N-1} } } dx.
\end{flalign*}
We then consider whether the results hold for the generalized functions $h_{\epsilon} (x)$. The principal contributions of this work are outlined below,
\begin{theorem}
	Assume that $u_{\epsilon}$ is a sequence of maximizers for the supremum mentioned in \eqref{eq: 1.5}. Suppose a function $h_{\epsilon}(x)$ is radially symmetric, nonnegative,
	continuous on $\overline{\mathbb{B}}$ and satisﬁes $h_{\epsilon} (x) > 0$ on $\overline{ \mathbb{B} } \setminus \{0\}$, satisfying the following,
	\begin{flalign*}
		& \underset{\epsilon \rightarrow 0}{\lim} h_{\epsilon} (x) = |x|^{N \epsilon} \cdot h_0 (x),\\
		& \underset{x \rightarrow 0}{\lim} h_{\epsilon} (x) |x|^{ - N \epsilon} =1.
	\end{flalign*} 
	Then, up to a subsequence, there exists a function $u_0$ such that $u_{\epsilon } \rightarrow u_0$ in $C^1 ( \overline{\mathbb{B}} )$ and $u_0$ is an extremal function attaining the supremum
	\begin{flalign}
		\underset{u \in W_0^{1, N} ( \overline{\mathbb{B}} ), ~ ||u||_{W_0^{1, N} ( \overline{\mathbb{B}}) }  \leq  1}{\sup} \int_{\mathbb{B} } h_{0} (x) e^{ \alpha_N |u|^{ \frac{N}{N-1} } }  dx. \label{1.3}
	\end{flalign}
\end{theorem}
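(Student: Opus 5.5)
We argue by blow-up analysis, in the spirit of the compactness results for the case $\epsilon<0$ in \cite{18} and for the conical case in \cite{3}, working entirely inside the radial class $\mathcal{S}$.

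\textbf{Step 1: Euler--Lagrange equation and weak limit.} Write $\Lambda_\epsilon$ for the supremum in \eqref{eq: 1.5} and $\Lambda_0$ for the supremum in \eqref{1.3}. We may take $u_\epsilon\ge 0$ and $\|\nabla u_\epsilon\|_N=1$. Then $u_\epsilon$ solves
\begin{flalign*}
-\Delta_N u_\epsilon=\frac{1}{\lambda_\epsilon}\,h_\epsilon(x)\,u_\epsilon^{\frac{1}{N-1}}\,e^{\alpha_N(1+\epsilon)u_\epsilon^{\frac{N}{N-1}}},
\qquad \lambda_\epsilon=\int_{\mathbb{B}} h_\epsilon\,u_\epsilon^{\frac{N}{N-1}}e^{\alpha_N(1+\epsilon)u_\epsilon^{\frac{N}{N-1}}}\,dx .
\end{flalign*}
Radial monotonicity gives $c_\epsilon:=\max u_\epsilon=u_\epsilon(0)$. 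Next we show $\liminf_{\epsilon\to0}\Lambda_\epsilon\ge\Lambda_0$: for a fixed admissible $u$, the convergence $h_\epsilon\to h_0$ together with Fatou's lemma gives $\int h_0e^{\alpha_N|u|^{N/(N-1)}}\le\liminf\Lambda_\epsilon$. The positivity of $\lambda_\epsilon$ and the bound $\liminf\lambda_\epsilon>0$ follow from $e^t\le 1+te^t$. Up to a subsequence, $u_\epsilon\rightharpoonup u_0$ weakly in $W_0^{1,N}$.

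\textbf{Step 2: rule out blow-up, i.e.\ show $c_\epsilon$ is bounded.} This is the main obstacle. Suppose $c_\epsilon\to\infty$. Set
\begin{flalign*}
r_\epsilon^N=\lambda_\epsilon\,c_\epsilon^{-\frac{N}{N-1}}e^{-\alpha_N(1+\epsilon)c_\epsilon^{\frac{N}{N-1}}},
\end{flalign*}
adjusting the scale to account for the weight $h_\epsilon\approx|x|^{N\epsilon}$ near the origin, which is where the hypothesis $\lim_{x\to0}h_\epsilon|x|^{-N\epsilon}=1$ enters. We then carry out the following sequence of steps:
\begin{enumerate}
\item Show $r_\epsilon\to0$.
\item Rescale via $\varphi_\epsilon(x)=c_\epsilon^{\frac1{N-1}}\bigl(u_\epsilon(r_\epsilon x)-c_\epsilon\bigr)$. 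Radial ODE estimates give $\varphi_\epsilon\to\varphi$ in $C^1_{loc}(\mathbb{R}^N)$, where $\varphi$ is the explicit bubble $\varphi=-\frac{N-1}{\alpha_N}\log\bigl(1+c_N|x|^{\frac N{N-1}}\bigr)$, with $\int e^{\alpha_N\frac N{N-1}\varphi}=1$.
\item Prove $u_\epsilon\rightharpoonup0$ and $c_\epsilon^{\frac1{N-1}}u_\epsilon\to G$, where $G$ is the radial Green function $G=-\frac{N}{\alpha_N}\log|x|$.
\item Use Carleson--Chang type capacity estimates on the annulus $r_\epsilon R\le|x|\le\delta$ to obtain the upper bound
\begin{flalign*}
\limsup\Lambda_\epsilon\le h_0(0)\,\frac{\omega_N}{N}\,e^{\alpha_N A+1+\frac12+\cdots+\frac1{N-1}}+|\mathbb{B}|\cdot(\ldots).
\end{flalign*}
Here $h_\epsilon(0)$-type factors are handled by the limit assumption on $h_\epsilon|x|^{-N\epsilon}$, and the factor $(1+\epsilon)$ contributes only $e^{o(1)}$.
\item Construct explicit radial test functions, in the Carleson--Chang/Lin form, that glue the bubble to $G$. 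This yields $\Lambda_0$ strictly larger than that bound, contradicting Step 1.
\end{enumerate}
The delicate point is that in our case $\epsilon>0$ the exponent $1+\epsilon$ is supercritical. So I would first verify that the weight $|x|^{N\epsilon}$ exactly compensates in the radial Moser change of variables $t=-N\log r$. That change of variables reduces the problem to a one-dimensional functional $\int_0^\infty e^{\alpha_N(1+\epsilon)w^{N/(N-1)}-(1+\epsilon)t}dt$, to which we apply the one-dimensional Carleson--Chang estimate with uniform constants.

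\textbf{Step 3: compactness once $c_\epsilon\le C$.} With $c_\epsilon$ bounded, the right-hand side of the Euler--Lagrange equation is bounded in $L^\infty$ (since $h_\epsilon$ is bounded). The regularity of Tolksdorf and Lieberman, or direct integration of the radial ODE
\begin{flalign*}
-\bigl(r^{N-1}|u'|^{N-2}u'\bigr)'=r^{N-1}f_\epsilon ,
\end{flalign*}
gives uniform $C^{1,\beta}(\overline{\mathbb{B}})$ bounds. Arzel\`a--Ascoli then yields $u_\epsilon\to u_0$ in $C^1(\overline{\mathbb{B}})$. By dominated convergence, $\Lambda_\epsilon\to\int h_0e^{\alpha_N u_0^{N/(N-1)}}$, and $\|\nabla u_0\|_N\le1$. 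Combining with Step 1 shows that $u_0$ attains \eqref{1.3}; here $u_0\ne0$ because $\Lambda_0>\int h_0$.
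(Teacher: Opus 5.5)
Your proposal is essentially the paper's own argument: the Euler--Lagrange equation with $\liminf_{\epsilon\to0}\Lambda_\epsilon\ge\Lambda_0$ via Fatou, blow-up at the origin at the weight-adjusted scale $r_\epsilon^{1/(1+\epsilon)}$, convergence to the standard bubble and to the Green function, a Carleson--Chang upper bound obtained through exactly the change of variables $\rho_\epsilon(r)=(1+\epsilon)^{\frac{N-1}{N}}\zeta_\epsilon\bigl(r^{\frac{1}{1+\epsilon}}\bigr)$ used in the paper's Lemma~2.7, Carleson--Chang/Lin test functions gluing the bubble to $G$ to beat that bound, and elliptic estimates once $c_\epsilon$ is bounded. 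The caveats are ones you share with the paper: because the $\Lambda_\epsilon$ are suprema over radial functions, your Fatou step only compares with radial competitors, so identifying the limit with the supremum in \eqref{1.3} over all of $W_0^{1,N}(\mathbb{B})$ tacitly requires the radial and full suprema for $h_0$ to coincide (e.g.\ $h_0$ radially nonincreasing), and the bubble limit and the strict test-function inequality tacitly require the assumed limits of $h_\epsilon$ to hold uniformly in $\epsilon$ and $h_0$ to be somewhat regular at the origin.
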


The outline of the proof of Theorem 1.1 is given below. Assume that $u_{\epsilon} $ are as mentioned in equation \eqref{1.3}. They are also the solutions of Euler--Lagrange equations below. If $u_{\epsilon}$ are not bounded, then there exists a constant $A_0$ satisfying
\begin{flalign*}
	\underset{u \in W_0^{1,N} \left( \mathbb{B} \right) \setminus \{0 \}, ||u||_{ W_0^{1,N} \left( \mathbb{B} \right) } \leq 1 }{\sup} \int_{\mathbb{B}} h_{ \epsilon } (x) e^{\alpha_N |u|^{ \frac{N}{N-1} } } dx  \leq \int_{ \mathbb{B} } h_0(x) dx + e^{ \alpha_N A_0 + \sum_{j=1 }^{N-1} \frac{1}{j} }.
\end{flalign*} 
On the other hand, we also find that there exist functions $u$ such that
\begin{flalign*}
	\underset{u \in W_0^{1,N} \left( \mathbb{B} \right) \setminus \{0 \}, ||u||_{ W_0^{1,N} \left( \mathbb{B} \right) } \leq 1 }{\sup} \int_{\mathbb{B}} h_{\epsilon } (x) e^{\alpha_N |u|^{ \frac{N}{N-1} } } dx  > \int_{\mathbb{B}} h_0(x) + e^{ \alpha_N A_0 + \sum_{j=1 }^{N-1} \frac{1}{j} }.
\end{flalign*} 
The above two inequalities yield a contradiction, which in turn implies the uniform boundedness of $u_{\epsilon}$ on $\mathbb{B}$. Then,  applying elliptic estimates, we obtain the desired result immediately.

\section{Some Lemmas}

Theorem 1.1 is proved by means of a blow-up analysis analogous to the one developed in \cite{13,14, 22, 23}, and we will divide the proof into several steps.

\subsection{The Euler--Lagrange equation of $u_{\beta}$}

From equation \eqref{1.3}, there exist some $u_{\epsilon} \in W_0^{1,N}\left(\mathbb{B} \right)$ satisfying $|| \nabla u_{\epsilon}||_{L^N (\mathbb{B})} = 1$ and 
\begin{flalign*}
	\mathcal{J_{\epsilon}} = \int_{\mathbb{B} } h_{ \epsilon} (x) e^{ \alpha_N (1 + \epsilon) |u_{\epsilon}|^{\frac{N}{N-1} } }  dx.
\end{flalign*}
It is not difficult to find that $u_{\epsilon}$ satisfy the Euler--Lagrange equations
\begin{equation}
	\begin{cases}
		& - \operatorname{div} (|\nabla u_{\epsilon}|^{N-2} \cdot  \nabla u_{\epsilon}) = \frac{1}{\lambda_{\epsilon}} h_{ \epsilon}(x) |u_{\epsilon}|^{ \frac{1}{N-1} }  e^{ \alpha_N (1 + \epsilon) |u_{\epsilon}|^{\frac{N}{N-1} } }, ~ \text{in} ~ \mathbb{B};\\
		& u_{\epsilon} \geq 0, ||u_{\epsilon}||_{W_0^{1,N} (\mathbb{B})} =1;\\
		& \lambda_{\epsilon} = \int_{\mathbb{B}} h_{ \epsilon} (x) |u_{\epsilon}|^{ \frac{N}{N-1} }  e^{ \alpha_N (1 + \epsilon) |u_{\epsilon}|^{\frac{N}{N-1} }  }dx. \label{eq: 2.1}
	\end{cases}	
\end{equation}
Since $u_{\epsilon}$ is bounded in $W_0^{1,N} (\mathbb{B})$, there exists  a subsequence (still denoted by $\{u_{\epsilon}\}_{\epsilon}$)
\begin{equation}
	\begin{cases}
		& u_{\epsilon} \rightharpoonup u_0, ~ W_0^{1,N} (\mathbb{B});\\
		& u_{\epsilon} \rightarrow u_0, ~ L^p (\mathbb{B});\\
		&  u_{\epsilon} \rightarrow u_0, ~ \text{a.e.} ~ \mathbb{B},
	\end{cases} \label{eq: 2.2}
\end{equation}
 as $\epsilon \rightarrow 0$. By the Lebesgue dominated convergence theorem and Fotou's lemma, we have
\begin{flalign*}
	& \int_{\mathbb{B}} h_{0} (x) e^{ \alpha_N |u|^{\frac{N}{N-1} }  } dx \\
	& = \underset{\epsilon \rightarrow 0}{\lim} \int_{\mathbb{B}} h_{ \epsilon} (x) e^{ \alpha_N (1 + \epsilon) |u|^{\frac{N}{N-1} }  } dx\\
	& \leq \underset{ \epsilon \rightarrow 0}{\lim \inf} \mathcal{J}_{\epsilon} \\
	& = \underset{ \epsilon \rightarrow 0}{\lim \inf} \int_{\mathbb{B}} h_{ \epsilon} (x) e^{ \alpha_N (1 + \epsilon) |u_{\epsilon}|^{\frac{N}{N-1} }  } dx. 
\end{flalign*}
Then we obtain 
\begin{flalign*}
	\mathcal{J} \leq \underset{ \epsilon \rightarrow 0}{\lim \inf} \int_{\mathbb{B}} h_{ \epsilon} (x) e^{ \alpha_N (1 + \epsilon) |u_{\epsilon}|^{\frac{N}{N-1} }  } dx. 
\end{flalign*}
Next, we prove that $\lambda_{\epsilon} > 0$ has a lower bound. Since $t^{ \frac{N}{N-1} } e^{t^{\frac{N}{N-1} }} \geq e^{t^{ \frac{N}{N-1} }  } - 1$ for $t \geq 0$, there holds 
\begin{flalign*}
	& \lambda_{\epsilon} = \int_{\mathbb{B}} h_{\epsilon} (x) |u_{\epsilon}|^{ \frac{N}{N-1} }  e^{ \alpha_N (1 + \epsilon) |u_{\epsilon}|^{\frac{N}{N-1} }  }dx\\
	& \geq \frac{1}{\alpha_N (1+ \epsilon)} \int_{\mathbb{B}} \alpha_N (1+ \epsilon) h_{ \epsilon} (x) |u_{\epsilon}|^{ \frac{N}{N-1} }  e^{ \alpha_N (1 + \epsilon) |u_{\epsilon}|^{ \frac{N}{N-1} }  }dx\\
	& \geq  \frac{1}{\alpha_N (1+ \epsilon)} \int_{\mathbb{B}} h_{ \epsilon} (x) \left( e^{ \alpha_N (1 + \epsilon) |u_{\epsilon}|^{ \frac{N}{N-1} }  } - 1 \right)dx.
\end{flalign*}
From then on, as $\epsilon \rightarrow 0$, we get that 
\begin{flalign*}
	\underset{\epsilon \rightarrow 0}{\lim \inf} \lambda_{\epsilon} \geq \frac{1}{\alpha_N} \left(\mathcal{J} - \int_{ \mathbb{B} } h_0 (x) dx \right) > 0; 
\end{flalign*}

We let $c_{\epsilon} = u_{\epsilon} (0) = \underset{\mathbb{B}}{\max}u_{\epsilon}$. There are two possibilities for the analysis of $u_{\epsilon}$, either $u_{\epsilon}$ are bounded in $\mathbb{B}$, or $\underset{\mathbb{B}}{\max}u_{\epsilon} \rightarrow + \infty$ as $\epsilon \rightarrow 0^+$. Then we describe the blow-up phenomenon of $u_{\epsilon}$.

\subsection{Blow-up Analysis}

 \begin{lemma}
 	The function $u_0 \equiv 0$ and $|\nabla u_{\epsilon}|^N dx \rightharpoonup \delta_0$ in the sense of measure, where $\delta_0$ denotes the usual Dirac measure giving  unit mass to the point 0.
 \end{lemma}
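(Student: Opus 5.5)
We work under the blow-up hypothesis $c_\epsilon=u_\epsilon(0)=\max_{\mathbb{B}}u_\epsilon\to+\infty$, since otherwise there is nothing to prove. The argument is the standard Lions-type concentration argument, combined with radial symmetry. We first prove that $u_0\equiv0$, and then that the energy measure concentrates at the origin.

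\emph{Step 1: $u_0\equiv 0$.} Suppose $u_0\not\equiv0$. Since $u_\epsilon\rightharpoonup u_0$ weakly in $W_0^{1,N}(\mathbb{B})$ and $\|\nabla u_\epsilon\|_N=1$, write $u_\epsilon=u_0+(u_\epsilon-u_0)$. A Brezis--Lieb type argument, or Lions' improvement, gives
\begin{flalign*}
\limsup_{\epsilon\to0}\|\nabla(u_\epsilon-u_0)\|_N^N\le 1-\|\nabla u_0\|_N^N<1 .
\end{flalign*}
The elementary inequality $(a+b)^{\frac{N}{N-1}}\le (1+\eta)a^{\frac{N}{N-1}}+C_\eta b^{\frac{N}{N-1}}$, together with H\"older's inequality, then shows that $h_\epsilon e^{\alpha_N(1+\epsilon)q|u_\epsilon|^{N/(N-1)}}$ is bounded in $L^1(\mathbb{B})$ for some $q>1$, uniformly in small $\epsilon$. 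We need two facts here. First, $h_\epsilon$ is uniformly bounded on $\overline{\mathbb{B}}$: it is continuous, converges, and $h_\epsilon(x)\sim|x|^{N\epsilon}$ near $0$. Second, $\epsilon$ is small, so $(1+\epsilon)(1+\eta)$ stays below the threshold allowed by the reduced Dirichlet norm, and the classical Moser--Trudinger inequality applies. It follows that the right-hand side $\lambda_\epsilon^{-1}h_\epsilon u_\epsilon^{1/(N-1)}e^{\alpha_N(1+\epsilon)u_\epsilon^{N/(N-1)}}$ of \eqref{eq: 2.1} is bounded in $L^{q'}$ for some $q'>1$; here we use the lower bound $\liminf\lambda_\epsilon>0$ proved above. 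Serrin-type local estimates and the De Giorgi--Nash--Moser iteration for the $N$-Laplacian then give a uniform $L^\infty$ bound on $u_\epsilon$. This contradicts $c_\epsilon\to\infty$, so $u_0\equiv0$.

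\emph{Step 2: concentration at $0$.} Since $\int_{\mathbb{B}}|\nabla u_\epsilon|^N=1$, after passing to a subsequence $|\nabla u_\epsilon|^Ndx\rightharpoonup\mu$, a probability measure on $\overline{\mathbb{B}}$. Suppose $\mu\neq\delta_0$. Then there are $x_0\in\overline{\mathbb{B}}$ and $r>0$ with $\mu(B_r(x_0))<1$. If $x_0\ne0$, radial symmetry is the natural tool. A radial function with $\|\nabla u\|_N\le1$ satisfies
\begin{flalign*}
u(x)\le \left(\frac{1}{\omega_{N-1}}\right)^{\frac1N}\left(\log\frac1{|x|}\right)^{\frac{N-1}{N}},
\end{flalign*}
which gives a uniform bound away from the origin. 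The right-hand side of the equation is therefore uniformly bounded on $\{|x|\ge\delta\}$, and elliptic estimates give $u_\epsilon\to u_0=0$ in $C^1$ there. Hence $\mu$ has no mass on $\overline{\mathbb{B}}\setminus\{0\}$, and so $\mu=\delta_0$. The case $x_0=0$ with $\mu(B_r(0))<1$ is therefore impossible: mass lost near $0$ must appear elsewhere, and we have just shown there is no mass elsewhere. For robustness, a cut-off $\phi$ supported in $B_r(0)$ can also be used: $\|\nabla(\phi u_\epsilon)\|_N^N\le\mu(B_r)+o(1)<1$ because $u_\epsilon\to0$ in $L^N$. This yields $L^q$ bounds on the nonlinearity near $0$, and the same elliptic estimates as in Step 1 again contradict $c_\epsilon\to\infty$.

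The \textbf{main obstacle} is Step 1, specifically justifying the Lions-type improved integrability uniformly in $\epsilon$. The issue is the factor $1+\epsilon$ in the exponent, combined with the weight $h_\epsilon$. We handle it by taking $\epsilon$ small relative to the gap $1-\|\nabla u_0\|_N^N$, and by using the uniform bound $\sup_{\overline{\mathbb{B}}}h_\epsilon\le C$. The elliptic regularity step for the quasilinear operator also requires a citation (Serrin, Tolksdorf) rather than a direct computation.
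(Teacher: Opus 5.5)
Your Step 1 follows the paper. If $u_0\not\equiv 0$, Lions' concentration--compactness principle gives an $L^q$ bound ($q>1$) on the right-hand side $F_\epsilon$ of the Euler--Lagrange equation, and elliptic estimates then bound $u_\epsilon$, contradicting $c_\epsilon\to+\infty$. Likewise, the cut-off argument you add ``for robustness'' is exactly the paper's proof of concentration. An energy deficit $\limsup_{\epsilon\to0}\int_{\mathbb{B}_{r_0}}|\nabla u_\epsilon|^N dx\le\gamma<1$ at the origin gives subcritical integrability on $\mathbb{B}_{r_0/2}$, hence a bound on $u_\epsilon$ near $0$, contradicting $c_\epsilon=u_\epsilon(0)\to+\infty$.

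Your main Step 2 argument is genuinely different. Instead of working at the origin, you use the radial lemma to bound $u_\epsilon$ uniformly on $\{|x|\ge\delta\}$. Then $F_\epsilon$ is bounded there, using $\liminf_{\epsilon\to0}\lambda_\epsilon>0$ and a uniform bound on $h_\epsilon$. This gives $C^1$ compactness on $\{2\delta\le|x|\le1\}$; in the radial case that is elementary from $(r^{N-1}|u_\epsilon'|^{N-2}u_\epsilon')'=-r^{N-1}F_\epsilon$. So $\nabla u_\epsilon\to\nabla u_0=0$ uniformly away from $0$, and since the limit measure has total mass $1$ on the compact set $\overline{\mathbb{B}}$, it must be $\delta_0$. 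No contradiction is needed, so your case split on $x_0$ is superfluous. This route also yields $u_\epsilon\to0$ in $C^1_{loc}(\overline{\mathbb{B}}\setminus\{0\})$ as a by-product. It does rely essentially on radial symmetry, whereas the paper's local argument at the maximum point does not.

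One step of your Step 1 needs repair. For $N\ge3$, the inequality $\limsup_{\epsilon\to0}\|\nabla(u_\epsilon-u_0)\|_N^N\le1-\|\nabla u_0\|_N^N$ does not follow from weak convergence. Brezis--Lieb requires $\nabla u_\epsilon\to\nabla u_0$ a.e., and without it the inequality can fail. Weakly null oscillations whose mean-zero profile $\rho$ has $\int|\rho|^{N-2}\rho\neq0$ give counterexamples, even among radial functions. Only for $N=2$ is the inequality automatic, from the Hilbert-space identity. Lions' theorem gives the exponential integrability directly, not this norm inequality.

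So you have two options:
\begin{itemize}
\item Cite Lions for $\sup_\epsilon\int_{\mathbb{B}}e^{p\alpha_N|u_\epsilon|^{N/(N-1)}}dx<\infty$ with some $p>1$, as the paper does.
\item Run your radial argument first. It uses only $\|\nabla u_\epsilon\|_N\le1$, $\liminf_{\epsilon\to0}\lambda_\epsilon>0$ and $\sup_\epsilon\|h_\epsilon\|_{L^\infty}<\infty$, not $u_0\equiv0$. It therefore gives $\nabla u_\epsilon\to\nabla u_0$ on $\mathbb{B}\setminus\{0\}$, after which Brezis--Lieb is legitimate.
\end{itemize}
Finally, the uniform bound on $h_\epsilon$ does not follow from continuity and pointwise convergence. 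It has to be assumed; the paper also uses it tacitly.
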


\begin{proof}
	We denote that
	\begin{flalign*}
		f_{\epsilon} (x) =\frac{1}{\lambda_{\epsilon}} h_{ \epsilon} (x) e^{ \alpha_N (1 + \epsilon) |u_{\epsilon}|^{\frac{N}{N-1} } }.
	\end{flalign*}
    We can find that $\int_{\mathbb{B}} |f_{\epsilon}|^p dx \leq C < + \infty $ by Lion's concentration-compactness principle in \citep{24,25}.
    Applying elliptic estimates to equations, we obtain that $u_{\epsilon}$ is uniformly bounded on $\mathbb{B}$. This contradicts that $c_{\epsilon} \rightarrow \infty$ as $\epsilon \rightarrow 0^+$. Therefore $u_0 \equiv 0$.
        
    Next, We need to prove that $|\nabla u_{\epsilon}| \rightharpoonup \delta_0$ as $\epsilon \rightarrow 0^+$.  Firstly, we have that $||u_{\epsilon}||_{W^{1,N} (\mathbb{B})} =1$. If the assertion is flase, then there would exist some $0 < r_0 <1$ such that 
        \begin{flalign*}
        	\underset{\epsilon \rightarrow 0}{\lim \sup} \int_{\mathbb{B}_{r_0}} |\nabla u_{\epsilon} |^N dx \leq \gamma < 1.  
        \end{flalign*}
        It is not difficult to see that $\operatorname{div} (|\nabla u_{\epsilon}|^{N-2} \cdot \nabla u_{\epsilon} )$ is bounded in $L^q (\mathbb{B}_{\frac{r_0}{2}})$ for some $q>1$. From elliptic estimates for the equation, we have that $u_{\epsilon}$ is bounded in $\mathbb{B}_{\frac{r_0}{4}}$, again contradicting $c_{\epsilon} \rightarrow + \infty$. We then obtain $|\nabla u_{\epsilon}| \rightharpoonup \delta_0$ as $\epsilon \rightarrow 0$. Thus the lemma is proved.
\end{proof}

Let $r_{\epsilon} = \lambda_{\epsilon}^{\frac{1}{N}} c_{\epsilon}^{- \frac{1}{N-1} } e^{ -\frac{\alpha_N (1+ \epsilon)}{N} c_{\epsilon}^{ \frac{N}{N-1} } }$ and $\tilde{u}_{\epsilon} = \tilde{u}_{\epsilon} (x) : = u_{\epsilon} (r_{\epsilon}^{ \frac{1}{1+ \epsilon} } x)$, from \citep{1} and the classical Trudinger-Moser inequality, we know that 
\begin{flalign*}
	r_{\epsilon} e^{\alpha |u_{\epsilon}|^{ \frac{N}{N-1} } } \rightarrow 0
\end{flalign*} 
as $\epsilon \rightarrow 0$, $\forall \alpha < \frac{\alpha_N(1+\epsilon)}{N}$. For $x \in \mathbb{B}_{r_{\epsilon}^{-1}  }$, we define two sequences of functions by
\begin{flalign*}
	\psi_{\epsilon} (x) = c_{\epsilon}^{-1} u_{\epsilon} (r_{\epsilon}^{ \frac{1}{1+ \epsilon} } x) ,
\end{flalign*}
and 
\begin{flalign*}
	\phi_{\epsilon} (x) = c_{\epsilon}^{ \frac{1}{N-1}  }\left( u_{\epsilon} (r_{\epsilon}^{ \frac{1}{1+ \epsilon} } x) -c_{\epsilon} \right)
\end{flalign*}
for $x \in \mathbb{B}_{\epsilon} = \{ x \in \mathbb{R}^N: ~ r_{\epsilon}^{\frac{1}{1+\epsilon }} x \in \mathbb{B} \}$.
Then, we consider the asymptotic behavior of $u_{\epsilon}$ at the origin.

\begin{lemma}
	Up to a subsequence, there holds $\psi_{\epsilon} \rightarrow 1$ in $C^1_{loc} (\mathbb{R}^N)$ and $\phi_{\epsilon} \rightarrow \phi$ in $C^1_{loc} (\mathbb{R}^N)$ as $\epsilon \rightarrow 0$, where 
	\begin{flalign*}
		\phi(x) = - \frac{N-1}{\alpha_N} \ln \left(1+ \frac{\alpha_N}{N^{ \frac{N}{N-1} }} |x|^{\frac{N}{N-1}  } \right). 
	\end{flalign*}
\end{lemma}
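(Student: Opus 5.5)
We work in the blow-up regime $c_\epsilon\to+\infty$ and follow the standard scaling argument (as in \cite{13,14,22,23}). There are three steps: derive the rescaled equations, get local uniform bounds, and identify the limit as a Liouville-type solution.

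\textbf{Step 1: rescaled equations.} Write $s_\epsilon=r_\epsilon^{\frac{1}{1+\epsilon}}$. Since $u_\epsilon$ is radial and decreasing, $0\le\psi_\epsilon\le 1$ and $\phi_\epsilon\le 0$, with $\psi_\epsilon(0)=1$ and $\phi_\epsilon(0)=0$. By \eqref{eq: 2.1}, $\psi_\epsilon$ satisfies
\begin{flalign*}
-\operatorname{div}(|\nabla\psi_\epsilon|^{N-2}\nabla\psi_\epsilon)=c_\epsilon^{-N}\lambda_\epsilon^{-1}s_\epsilon^N h_\epsilon(s_\epsilon x)\,u_\epsilon^{\frac{1}{N-1}}e^{\alpha_N(1+\epsilon)u_\epsilon^{\frac{N}{N-1}}}.
\end{flalign*}
The right-hand side is $O(c_\epsilon^{-N}\lambda_\epsilon^{-1}r_\epsilon^{N/(1+\epsilon)}c_\epsilon^{\frac1{N-1}}e^{\alpha_N(1+\epsilon)c_\epsilon^{N/(N-1)}}\,|s_\epsilon x|^{N\epsilon})$. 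We use the hypothesis $h_\epsilon(y)|y|^{-N\epsilon}\to1$ near $0$, which is precisely why the exponent $1/(1+\epsilon)$ appears in $s_\epsilon$. With the definition of $r_\epsilon$, this right-hand side tends to $0$ locally uniformly. Tolman/Serrin-type $C^{1,\alpha}$ estimates then give $\psi_\epsilon\to\psi$ in $C^1_{loc}(\mathbb{R}^N)$, where $\psi$ is $N$-harmonic, bounded, and satisfies $\psi(0)=1=\max\psi$. By the Liouville theorem, or simply by radial ODE integration, $\psi\equiv1$.

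\textbf{Step 2: equation for $\phi_\epsilon$.} This function satisfies
\begin{flalign*}
-\operatorname{div}(|\nabla\phi_\epsilon|^{N-2}\nabla\phi_\epsilon)=\frac{h_\epsilon(s_\epsilon x)}{|s_\epsilon x|^{N\epsilon}}\,|x|^{N\epsilon}\,\psi_\epsilon^{\frac{1}{N-1}}\,e^{\alpha_N(1+\epsilon)\left(u_\epsilon^{\frac{N}{N-1}}-c_\epsilon^{\frac{N}{N-1}}\right)},
\end{flalign*}
after adjusting the normalization of $r_\epsilon$ so that the constants cancel. Since $\psi_\epsilon\to1$, the mean value theorem gives
\begin{flalign*}
u_\epsilon^{\frac{N}{N-1}}-c_\epsilon^{\frac{N}{N-1}}=\frac{N}{N-1}\,\phi_\epsilon\,(1+o(1)).
\end{flalign*}
Because $\phi_\epsilon\le0$, the right-hand side is bounded by a constant on each ball $\mathbb{B}_R$. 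The radial form of the equation, $-(\rho^{N-1}|\phi_\epsilon'|^{N-2}\phi_\epsilon')'=\rho^{N-1}f$, then bounds $|\phi_\epsilon'|$ on $[0,R]$. Since $\phi_\epsilon(0)=0$, this bounds $\phi_\epsilon$ locally.

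\textbf{Step 3: identifying the limit.} The $C^{1,\alpha}$ estimates give $\phi_\epsilon\to\phi$ in $C^1_{loc}$, where
\begin{flalign*}
-\Delta_N\phi=e^{\frac{N}{N-1}\alpha_N\phi},\qquad \phi(0)=0=\sup\phi .
\end{flalign*}
Fatou's lemma together with $\lambda_\epsilon$ normalization yields $\int_{\mathbb{R}^N}e^{\frac{N}{N-1}\alpha_N\phi}\le1$. The radial ODE solution with $\phi(0)=0$ and $\phi'(0)=0$ is unique. Direct verification then shows that it is $-\frac{N-1}{\alpha_N}\ln\!\left(1+\frac{\alpha_N}{N^{\frac{N}{N-1}}}|x|^{\frac N{N-1}}\right)$; alternatively, one can cite the classification of Esposito or of Carleson--Chang.

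\textbf{Main obstacle.} The delicate point is Step 1's bookkeeping. We must check that the singular weight $h_\epsilon(s_\epsilon x)\sim|s_\epsilon x|^{N\epsilon}$, combined with the rescaling exponent $1/(1+\epsilon)$, produces a factor $|x|^{N\epsilon}\to1$ locally uniformly. We must also check that the normalization of $r_\epsilon$ (including the $\lambda_\epsilon$, $c_\epsilon$ powers) exactly cancels the exponential growth. The lower bound $\liminf\lambda_\epsilon>0$ from \S2.1 is also needed, to ensure $r_\epsilon\to0$. If the stated $r_\epsilon$ is off by a constant factor, it will only rescale $\phi$, and I would fix the constant so that the limiting equation has coefficient $1$.
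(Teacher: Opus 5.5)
Your proposal is correct and is essentially the paper's argument. You rescale, use $C^{1,\alpha}$ estimates and the Liouville theorem to get $\psi_\epsilon\to1$, then pass to the limit in the $\phi_\epsilon$ equation and identify the radial solution of $-\operatorname{div}(|\nabla\phi|^{N-2}\nabla\phi)=e^{\frac{N}{N-1}\alpha_N\phi}$ with $\phi(0)=0$. Your limit equation, without the leftover $|x|^{N\epsilon}$ that the paper writes, is the one that actually matches the stated formula. Your radial-integration bound on $\phi_\epsilon$ also supplies the local $L^\infty$ control the paper leaves implicit.

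There are a few minor fixes:
the prefactor in the $\psi_\epsilon$ equation is $c_\epsilon^{1-N}$, not $c_\epsilon^{-N}$, though the right-hand side is still $O(c_\epsilon^{-N})$;
the given $r_\epsilon$ already makes the constants cancel exactly, so no renormalization is needed;
$r_\epsilon\to0$ comes from the Moser--Trudinger bound $\lambda_\epsilon e^{-\gamma c_\epsilon^{N/(N-1)}}\le C$ for fixed $\gamma\in(0,\alpha_N)$, not from $\liminf\lambda_\epsilon>0$;
and, like the paper, you tacitly need $h_\epsilon(y)|y|^{-N\epsilon}\to1$ as $y\to0$ uniformly in $\epsilon$.
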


\begin{proof}
	Direct calculation shows
    \begin{flalign*}
    	& - \operatorname{div} \left(  |\nabla \psi_{\epsilon} (x) |^{N-2} \cdot \nabla \psi_{\epsilon} (x) \right) \\
    	& = \left(c_{\epsilon}^{-1}  \right)^{N-1} r_{\epsilon}^{N } \cdot \frac{1}{\lambda_{\epsilon}} h_{ \epsilon} (x)\psi_{\epsilon}^{ \frac{1}{N-1} }  e^{ \alpha_N (1 + \epsilon) |\tilde{u}_{\epsilon}|^{\frac{N}{N-1} } }\\
    	& = c_{\epsilon}^{ - N } h_{ \epsilon} (x) \psi_{\epsilon}^{ \frac{1}{N-1} } \exp \left\lbrace  \alpha_N (1 + \epsilon) \left(|\tilde{u}_{\epsilon}|^{\frac{N}{N-1} }- c_{\epsilon}^{ \frac{N}{N-1} } \right) \right\rbrace 
    \end{flalign*}
     We also have
    \begin{flalign*}
    	& - \operatorname{div} \left(  |\nabla \phi_{\epsilon} (x) |^{N-2} \cdot \nabla \phi_{\epsilon} (x) \right) \\
    	& = h_{\epsilon} (x) \psi_{\epsilon}^{ \frac{1}{N-1} } \exp \left\lbrace  \alpha_N (1 + \epsilon) \left(|\tilde{u}_{\epsilon}|^{\frac{N}{N-1} }- c_{\epsilon}^{ \frac{N}{N-1} } \right) \right\rbrace .
    \end{flalign*}
     We note that $| \psi_{\epsilon} |\leq 1$, $r_{\epsilon} \rightarrow 0$, $c_{\epsilon} \rightarrow + \infty$ as $\epsilon \rightarrow 0$. Then we have $\operatorname{div} \left( |\nabla \psi_{\epsilon}|^{N-2} \cdot \nabla \psi_{\epsilon} \right) \in L^{ \frac{N}{N-1} } (\mathbb{B}_{\epsilon} )$. From elliptic estimates in \cite{9}, we get
     \begin{flalign*}
     	\psi_{\epsilon} \rightarrow \psi_0, ~ \text{in} ~ C^1_{loc} (\mathbb{R}^N) ~ \text{as} ~ \epsilon \rightarrow 0,
     \end{flalign*}
     where
     \begin{flalign*}
     	\psi_0 (0) = 1, ~ 0 \leq \psi_0 (x) \leq 1, ~ \forall x \in \mathbb{R}^N.
     \end{flalign*}
     Also, The function $\psi_0$ satisfies $- \operatorname{div} \left( |\nabla \psi_{0}|^{N-2} \cdot \nabla \psi_{0} \right) =0 $ in $\mathbb{R}^N$. From the Liouville theorem, we deduce that  $\psi_0 (0) \equiv \psi_0 (x) = 1$.
     
     Also we have by applying elliptic estimates in \cite{4} that,
     \begin{flalign*}
     	\phi_{\epsilon} \rightarrow \phi,~ \text{in} ~ C^1_{loc} (\mathbb{R}^N) ~ \text{as} ~ \epsilon \rightarrow 0,
     \end{flalign*}
     where $\phi$ satisﬁes
      \begin{equation*}
     	\begin{cases}
     		& - \operatorname{div} \left( |\nabla \phi|^{N-2} \cdot \nabla \phi \right) = |x|^{N\epsilon} \exp \{ \alpha_N \frac{N}{N-1} \phi \}, ~ \text{in} ~ \mathbb{R}^N;\\
     		& \underset{\mathbb{R}^N}{\sup} \phi = \phi(0) = 0.
     	\end{cases}
     \end{equation*}
     Thus the results hold.
\end{proof}

\begin{remark} \label{rem: 2.1}
    This lemma means that the weak function $\phi$ satisfies
    \begin{equation*}
    	\begin{cases}
    		& - \operatorname{div} \left( |\nabla \phi|^{N-2} \cdot \nabla \phi \right) = |x|^{N \epsilon} \exp \{ \alpha_N \frac{N}{N-1} \phi \}, ~ \text{in} ~ \mathbb{R}^N;\\
    		& \underset{\mathbb{R}^N}{\sup} \phi = \phi(0) = 0.
    	\end{cases}
    \end{equation*}
     Moreover, $\phi(x) = - \frac{N-1}{\alpha_N \left(1 + \epsilon \right) } \ln \left(1+ \left( \frac{N \left(1 + \epsilon \right) }{\omega_N} \right)^{ \frac{1}{N-1} } |x|^{\frac{N \left( 1+ \epsilon \right) }{N-1}  } \right)$ and, 
     \begin{flalign}
     	\int_{\mathbb{R}^N} |x|^{N \epsilon} e^{ \alpha_N \frac{N}{N-1} \phi } dx = 1. \label{eq: 2.3}
     \end{flalign}
\end{remark}

We proceed to analyze the convergence of $u_{\epsilon} $ away from zero. As in \cite{12}, we set $u_{\epsilon, \iota} = \min \{ \iota c_{\epsilon}, ~ u_{\epsilon} \}$ for any $\iota \in (0,1) $.
 
\begin{lemma}
	For any $ 0 < \iota <1$, we have 
	\begin{equation*}
		\underset{\epsilon \rightarrow 0}{\lim} \int_{\mathbb{B}} |\nabla u_{\epsilon, \iota} |^N dx = \iota.
	\end{equation*}
\end{lemma}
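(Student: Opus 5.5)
We follow the standard truncation argument (as in \cite{12}), proving matching lower and upper bounds for $\int_{\mathbb{B}} |\nabla u_{\epsilon,\iota}|^N dx$. Write $\Theta_\epsilon = \alpha_N(1+\epsilon)$. The tools are:
\begin{itemize}
\item test functions built from $u_{\epsilon,\iota}$ and $(u_\epsilon - \iota c_\epsilon)^+$ in the Euler--Lagrange equation \eqref{eq: 2.1};
\item the local profile of Lemma 2.2 for the lower bound on the energy near the origin;
\item the normalization $\|\nabla u_\epsilon\|_{L^N}=1$.
\end{itemize}

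\textbf{Step 1: integral identities.} Test \eqref{eq: 2.1} with $(u_\epsilon - \iota c_\epsilon)^+$; the gradient of this function is supported on $\{u_\epsilon > \iota c_\epsilon\}$. This gives
\begin{flalign*}
\int_{\mathbb{B}} |\nabla (u_\epsilon-\iota c_\epsilon)^+|^N dx = \int_{\mathbb{B}} \frac{h_\epsilon}{\lambda_\epsilon} u_\epsilon^{\frac{1}{N-1}} (u_\epsilon-\iota c_\epsilon)^+ e^{\Theta_\epsilon u_\epsilon^{\frac{N}{N-1}}} dx .
\end{flalign*}
Since $u_\epsilon$ is radially decreasing with maximum $c_\epsilon$ at $0$, for every fixed $R>0$ the ball $\mathbb{B}_{R r_\epsilon^{1/(1+\epsilon)}}$ lies in $\{u_\epsilon>\iota c_\epsilon\}$ once $\epsilon$ is small. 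This holds because $u_\epsilon = c_\epsilon + c_\epsilon^{-1/(N-1)}\phi_\epsilon$ with $\phi_\epsilon$ locally bounded by Lemma 2.2. Restrict the right-hand side to this ball and change variables $x = r_\epsilon^{1/(1+\epsilon)} y$. The factors are then:
\begin{itemize}
\item $u_\epsilon^{1/(N-1)}(u_\epsilon-\iota c_\epsilon)^+ = (1-\iota)c_\epsilon^{N/(N-1)}(1+o(1))$;
\item the exponential equals $e^{\Theta_\epsilon c_\epsilon^{N/(N-1)}}\, e^{\Theta_\epsilon\frac{N}{N-1}\phi_\epsilon(y)+o(1)}$, using $u_\epsilon^{N/(N-1)} - c_\epsilon^{N/(N-1)} = \frac{N}{N-1}\phi_\epsilon + o(1)$ locally;
\item the prefactor $r_\epsilon^N/\lambda_\epsilon$ combines with $c_\epsilon^{N/(N-1)}e^{\Theta_\epsilon c_\epsilon^{N/(N-1)}}$ to give $1$, which is exactly how $r_\epsilon$ was defined.
\end{itemize}
The weight becomes $h_\epsilon(r_\epsilon^{1/(1+\epsilon)}y)$, which behaves like $|y|^{N\epsilon}$ times a scaling factor by the hypothesis $\lim_{x\to0}h_\epsilon(x)|x|^{-N\epsilon}=1$. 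Letting $\epsilon\to0$ and then $R\to\infty$, \eqref{eq: 2.3} yields
\begin{flalign*}
\liminf_{\epsilon\to0}\int_{\mathbb{B}} |\nabla (u_\epsilon-\iota c_\epsilon)^+|^N dx \ge 1-\iota .
\end{flalign*}

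\textbf{Step 2: same argument for the truncation.} Test with $u_{\epsilon,\iota}$ itself. Then
\begin{flalign*}
\int_{\mathbb{B}}|\nabla u_{\epsilon,\iota}|^N dx=\int_{\mathbb{B}}\frac{h_\epsilon}{\lambda_\epsilon}u_\epsilon^{\frac{1}{N-1}}u_{\epsilon,\iota}\,e^{\Theta_\epsilon u_\epsilon^{\frac{N}{N-1}}}dx,
\end{flalign*}
and on the same blow-up ball $u_{\epsilon,\iota}=\iota c_\epsilon$. The identical computation gives $\liminf_{\epsilon\to0}\int_{\mathbb{B}}|\nabla u_{\epsilon,\iota}|^N dx\ge\iota$.

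\textbf{Step 3: combine.} The two gradients have disjoint supports, so
\begin{flalign*}
\int_{\mathbb{B}}|\nabla u_{\epsilon,\iota}|^N + \int_{\mathbb{B}}|\nabla (u_\epsilon-\iota c_\epsilon)^+|^N = \int_{\mathbb{B}}|\nabla u_\epsilon|^N = 1 .
\end{flalign*}
Together with the two lower bounds this forces $\limsup\int_{\mathbb{B}}|\nabla u_{\epsilon,\iota}|^N\le 1-(1-\iota)=\iota$, hence the limit equals $\iota$.

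\textbf{Main obstacle.} The delicate point is the change of variables in Steps 1--2. Two things must be checked. First, the scaling $r_\epsilon^{1/(1+\epsilon)}$ together with $h_\epsilon$ must produce precisely the limit density $|y|^{N\epsilon}e^{\alpha_N\frac{N}{N-1}\phi}$ of Remark~\ref{rem: 2.1}, with total mass $1$. Second, the Taylor expansion $u_\epsilon^{N/(N-1)}=c_\epsilon^{N/(N-1)}+\frac{N}{N-1}\phi_\epsilon+o(1)$ must hold uniformly on compact sets, which follows from $C^1_{loc}$ convergence of $\phi_\epsilon$ and $c_\epsilon\to\infty$. Only lower bounds from the blow-up region are needed, because the normalization supplies the complementary upper bound. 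The nonnegativity of the integrands lets us discard the region outside the ball by Fatou's lemma, so no control of $u_\epsilon$ away from the origin is required.
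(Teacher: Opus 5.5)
Your proposal is correct and is essentially the paper's proof. Like the paper, you test \eqref{eq: 2.1} with $u_{\epsilon,\iota}$ and with $(u_\epsilon-\iota c_\epsilon)^+$, extract the lower bounds $\iota$ and $1-\iota$ from the blow-up ball $\mathbb{B}_{R r_\epsilon^{1/(1+\epsilon)}}$ via Lemma 2.2 and \eqref{eq: 2.3}, and conclude from the splitting $\int_{\mathbb{B}}|\nabla u_{\epsilon,\iota}|^N dx+\int_{\mathbb{B}}|\nabla (u_\epsilon-\iota c_\epsilon)^+|^N dx=1$. The differences are cosmetic: you use a locally uniform expansion $u_\epsilon^{\frac{N}{N-1}}-c_\epsilon^{\frac{N}{N-1}}=\frac{N}{N-1}\phi_\epsilon+o(1)$ where the paper uses a mean-value bound plus Fatou, and, as in the paper, you tacitly assume that $h_\epsilon(x)|x|^{-N\epsilon}\to 1$ near $0$ uniformly in $\epsilon$.
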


\begin{proof}
	By the divergence theorem, we have
     \begin{flalign*}
     	\int_{\mathbb{B}} |\nabla u_{\epsilon, \iota} |^N dx & = -\int_{\mathbb{B}} u_{\epsilon, \iota} \operatorname{div} \left( | \nabla u_{\epsilon}|^{N-2} \cdot \nabla u_{\epsilon} \right) dx\\
     	& \geq \frac{1}{\lambda_{\epsilon}} \int_{\mathbb{B}_{ R_{\epsilon}^{ \frac{1}{ 1+ \epsilon } } } (0) }  h_{ \epsilon} (x) u_{\epsilon}^{ \frac{1}{N-1} } \cdot u_{\epsilon,\iota}  \exp \{ \alpha_N (1 + \epsilon) |u_{\epsilon}|^{\frac{N}{N-1} } \} dx\\
     	& = \iota ( 1 + o_{\epsilon} (1)) \cdot\\
        & \int_{\mathbb{B}_R (0)} |y|^{N \epsilon} \exp \left\lbrace  \alpha_N ( 1+ \epsilon ) \left(|\tilde{u}_{\epsilon}|^{ \frac{N}{N-1} } -  c_{\epsilon}^{ \frac{N}{N-1} } \right) \right\rbrace  dy\\
        & + o_{\epsilon}(1).
     \end{flalign*}
     Testing the equation \eqref{eq: 2.1} by $(u_{\epsilon} - \iota c_{\epsilon})^+$, and we get that 
     \begin{flalign*}
     	\int_{\mathbb{B}} |\nabla (u_{\epsilon} - \iota c_{\epsilon})^+ |^N dx & = -\int_{\mathbb{B}} (u_{\epsilon} - \iota c_{\epsilon})^+ \operatorname{div} \left( | \nabla (u_{\epsilon} - \iota c_{\epsilon})^+|^{N-2} \cdot \nabla (u_{\epsilon} - \iota c_{\epsilon})^+ \right) dx\\
     	& \geq \frac{1}{\lambda_{\epsilon}} \int_{\mathbb{B}_{ R_{\epsilon}^{ \frac{1}{ 1+ \epsilon } } } (0) }  h_{ \epsilon} (x) u_{\epsilon} \cdot ( u_{\epsilon} - \iota c_{\epsilon} )^+  \exp \{ \alpha_N (1 + \epsilon) |u_{\epsilon}|^{\frac{N}{N-1} } \} dx\\
     	& = (1 - \iota) ( 1 + o_{\epsilon} (1)) \cdot\\
     	& \int_{\mathbb{B}_R (0)} |y|^{2 \epsilon} \exp \left\lbrace  \alpha_N ( 1+ \epsilon ) \left(|\tilde{u}_{\epsilon}|^{ \frac{N}{N-1} } - c_{\epsilon}^{ \frac{N}{N-1} } \right) \right\rbrace  dy\\
     	& + o_{\epsilon}(1).
     \end{flalign*}
     In fact, let $t_{\epsilon}$ lie between $u_{\epsilon}(r_{\epsilon}^{ \frac{1}{1+ \epsilon} } x )$ and $c_{\epsilon}$. By the mean value theorem, we have that
     \begin{flalign*}
     	& |\tilde{u}_{\epsilon}|^{ \frac{N}{N-1} } - c_{\epsilon}^{ \frac{N}{N-1} }\\
      	& \geq \frac{t_{\epsilon}^{ \frac{1}{N-1} }}{ c_{\epsilon}^{ \frac{1}{N-1} } }  c_{\epsilon}^{ \frac{1}{N-1} } ( | \tilde{u}_{\epsilon}| - c_{\epsilon} )\\
      	& = \frac{t_{\epsilon}^{ \frac{1}{N-1} }}{ c_{\epsilon}^{ \frac{1}{N-1} } } \phi_{\epsilon} .
      \end{flalign*}
     From Fatou's Lemma and equation (2.3), we have
     \begin{flalign}
     	& \underset{\epsilon \rightarrow 0}{\lim \inf} \int_{\mathbb{B}} |\nabla u_{\epsilon, \iota}|^N dx \geq \iota ; \label{eq: 2.4} \\
     	& \underset{\epsilon \rightarrow 0}{\lim \inf} \int_{\mathbb{B}} |\nabla (u_{\epsilon} - \iota c_{\epsilon} )|^N dx \geq 1 - \iota . \label{eq: 2.5}
     \end{flalign}
     We also note that 
     \begin{flalign}
     	\int_{\mathbb{B}} |\nabla u_{\epsilon, \iota}|^N dx + \int_{\mathbb{B}} | \nabla (u_{\epsilon} - \iota c_{\epsilon})^+ |^N dx = \int_{\mathbb{B}} |\nabla u_{\epsilon}|^N dx = 1+ o_{\epsilon} (1). \label{eq: 2.6}
     \end{flalign} 
     Combining \eqref{eq: 2.4}, \eqref{eq: 2.5} and \eqref{eq: 2.6}, we have that the results hold.
\end{proof}

\begin{lemma}
	There holds 
	\begin{flalign*}
		\underset{\epsilon \rightarrow 0}{\lim \sup}\int_{\mathbb{B}} h_{ \epsilon} (x)\exp \{ \alpha_N (1 + \epsilon) u_{\epsilon}^{ \frac{N}{N-1} } \} dx & \leq \int_{\mathbb{B}} h_0(x) dx \\
		& + \underset{R \rightarrow + \infty}{\lim} \underset{\epsilon \rightarrow 0}{\lim \sup} \int_{ \mathbb{B}_{R_{ r_{\epsilon}^{ \frac{1}{1+\epsilon} } }} } h_{ \epsilon} (x) \exp \{ \alpha_N (1 + \epsilon) u_{\epsilon}^{ \frac{N}{N-1} } \} dx.
	\end{flalign*}
\end{lemma}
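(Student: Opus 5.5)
The plan is the standard Carleson–Chang / Li type splitting argument, using the previous lemma (the energy of the truncations $u_{\epsilon,\iota}$) and the fact that $u_{\epsilon}\to 0$ weakly from Lemma 2.1. Fix $\iota\in(0,1)$ and split $\mathbb{B}$ into $\{u_{\epsilon}\le \iota c_{\epsilon}\}$ and $\{u_{\epsilon}>\iota c_{\epsilon}\}$.

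\emph{The set where $u_{\epsilon}\le \iota c_{\epsilon}$.} Here $u_{\epsilon}=u_{\epsilon,\iota}$, and the previous lemma gives $\int_{\mathbb{B}}|\nabla u_{\epsilon,\iota}|^N dx\to\iota<1$. Choose $q>1$ with $q(1+\epsilon)\iota^{\frac{1}{N-1}}<1$ for $\epsilon$ small. By the classical Moser--Trudinger inequality applied to $u_{\epsilon,\iota}/\|\nabla u_{\epsilon,\iota}\|_{L^N}$, the family $e^{\alpha_N(1+\epsilon)u_{\epsilon,\iota}^{N/(N-1)}}$ is bounded in $L^q(\mathbb{B})$. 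Moreover $u_{\epsilon,\iota}\to 0$ a.e. by Lemma 2.1, and $h_{\epsilon}$ is uniformly bounded and converges to $h_0$. Vitali's theorem (uniform integrability from the $L^q$ bound) then yields
\[
\int_{\{u_{\epsilon}\le\iota c_{\epsilon}\}} h_{\epsilon}e^{\alpha_N(1+\epsilon)u_{\epsilon}^{\frac{N}{N-1}}}dx\le\int_{\mathbb{B}}h_{\epsilon}e^{\alpha_N(1+\epsilon)u_{\epsilon,\iota}^{\frac{N}{N-1}}}dx\to\int_{\mathbb{B}}h_0\,dx .
\]

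\emph{The set where $u_{\epsilon}>\iota c_{\epsilon}$.} There $1\le u_{\epsilon}^{N/(N-1)}/(\iota c_{\epsilon})^{N/(N-1)}$, so this part is at most
\[
\frac{\lambda_{\epsilon}}{\iota^{\frac{N}{N-1}}c_{\epsilon}^{\frac{N}{N-1}}}\cdot\frac{1}{\lambda_{\epsilon}}\int_{\mathbb{B}}h_{\epsilon}u_{\epsilon}^{\frac{N}{N-1}}e^{\alpha_N(1+\epsilon)u_{\epsilon}^{\frac{N}{N-1}}}dx=\frac{\lambda_{\epsilon}}{\iota^{\frac{N}{N-1}}c_{\epsilon}^{\frac{N}{N-1}}}.
\]
Hence
\[
\limsup_{\epsilon\to0}\int_{\mathbb{B}}h_{\epsilon}e^{\alpha_N(1+\epsilon)u_{\epsilon}^{\frac{N}{N-1}}}dx\le\int_{\mathbb{B}}h_0\,dx+\iota^{-\frac{N}{N-1}}\limsup_{\epsilon\to0}\frac{\lambda_{\epsilon}}{c_{\epsilon}^{N/(N-1)}} ,
\]
and letting $\iota\to1$ replaces the factor $\iota^{-N/(N-1)}$ by $1$.

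\emph{Comparing $\lambda_{\epsilon}/c_{\epsilon}^{N/(N-1)}$ with the concentrated integral.} It remains to show
\[
\limsup_{\epsilon\to0}\frac{\lambda_{\epsilon}}{c_{\epsilon}^{N/(N-1)}}\le\lim_{R\to\infty}\limsup_{\epsilon\to0}\int_{\mathbb{B}_{Rr_{\epsilon}^{1/(1+\epsilon)}}}h_{\epsilon}e^{\alpha_N(1+\epsilon)u_{\epsilon}^{\frac{N}{N-1}}}dx .
\]
On $\mathbb{B}_{Rr_{\epsilon}^{1/(1+\epsilon)}}$, Lemma 2.2 gives $u_{\epsilon}/c_{\epsilon}=\psi_{\epsilon}\to1$ uniformly. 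The change of variables $x=r_{\epsilon}^{1/(1+\epsilon)}y$, together with the definition of $r_{\epsilon}$ and $h_{\epsilon}(x)\sim|x|^{N\epsilon}$ near $0$, shows that this integral equals
\[
(1+o(1))\frac{\lambda_{\epsilon}}{c_{\epsilon}^{N/(N-1)}}\int_{\mathbb{B}_R}|y|^{N\epsilon}e^{\alpha_N\frac{N}{N-1}\phi}dy .
\]
By \eqref{eq: 2.3}, the last integral tends to $1$ as $R\to\infty$, so the two quantities in the displayed inequality actually agree in the limit.

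The main obstacle is this last rescaling step: tracking the powers $r_{\epsilon}^{N/(1+\epsilon)}$ against $r_{\epsilon}^N$, which differ by $r_{\epsilon}^{-N\epsilon/(1+\epsilon)}$. I would absorb this factor into the weight $h_{\epsilon}(r_{\epsilon}^{1/(1+\epsilon)}y)\approx r_{\epsilon}^{N\epsilon/(1+\epsilon)}|y|^{N\epsilon}$, using the hypothesis $\lim_{x\to0}h_{\epsilon}(x)|x|^{-N\epsilon}=1$. This requires that limit to hold uniformly in $\epsilon$, which I would assume. Local uniform convergence of $\phi_{\epsilon}$ handles the exponent.
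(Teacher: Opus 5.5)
Your proposal is correct and follows the paper's proof. It uses the same split at the level $\iota c_{\epsilon}$, the same bound $\lambda_{\epsilon}/(\iota c_{\epsilon})^{\frac{N}{N-1}}$ on $\{u_{\epsilon}>\iota c_{\epsilon}\}$ giving \eqref{eq: 2.7} after $\iota\to 1$, and the same rescaling $x=r_{\epsilon}^{\frac{1}{1+\epsilon}}y$ combined with \eqref{eq: 2.3} giving \eqref{eq: 2.8}. The extra details you supply are what the paper's terser argument uses tacitly: the uniform integrability on $\{u_{\epsilon}\le\iota c_{\epsilon}\}$ via the truncation-energy lemma, Moser--Trudinger and Vitali; the absorption of $r_{\epsilon}^{-N\epsilon/(1+\epsilon)}$ into $h_{\epsilon}(r_{\epsilon}^{\frac{1}{1+\epsilon}}y)$; and the assumed uniformity in $\epsilon$ of $h_{\epsilon}(x)|x|^{-N\epsilon}\to 1$, together with a uniform bound on $h_{\epsilon}$.
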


\begin{proof}
	For any $0<\iota<1$, we have
	\begin{flalign*}
		\int_{\mathbb{B}} h_{\epsilon} (x) \exp \{ \alpha_N ( 1+ \epsilon) u_{\epsilon}^{ \frac{N}{N-1} }  \} & = \int_{ u_{\epsilon} \leq \iota c_{\epsilon} } h_{\epsilon} (x) \exp \{ \alpha_N ( 1+ \epsilon) u_{\epsilon}^{ \frac{N}{N-1} }  \} dx \\
		&+ \int_{ u_{\epsilon} > \iota c_{\epsilon} } h_{\epsilon} (x) \exp \{ \alpha_N ( 1+ \epsilon) u_{\epsilon}^{ \frac{N}{N-1} }  \} dx.
	\end{flalign*}
    The first lemma implies that $u_{\epsilon,\iota}$ converges to $0$ a.e. in $\mathbb{B}$, from which we obtain
    \begin{flalign*}
    	\exp \{ \alpha_N ( 1+ \epsilon) u_{\epsilon}^{ \frac{N}{N-1} }  \} \rightarrow \exp \{ 0 \}
    \end{flalign*}
     a.e. in $\mathbb{B}$.
     \begin{flalign*}
     	\int_{ u_{\epsilon} \leq \iota c_{\epsilon} } h_{\epsilon} (x) \exp \{ \alpha_N ( 1+ \epsilon) u_{\epsilon}^{ \frac{N}{N-1} }  \} dx & \leq \int_{ \mathbb{B} } h_{\epsilon} (x) \exp \{ \alpha_N ( 1+ \epsilon) u_{\epsilon, \iota}^{ \frac{N}{N-1} }  \} dx\\
     	& = \int_{\mathbb{B} } h_0(x) dx + o_{ \epsilon } (1).
     \end{flalign*}
     In additional, we estimate
     \begin{flalign*}
     	\int_{ u_{\epsilon} > \iota c_{\epsilon} } h_{\epsilon} (x) \exp \{ \alpha_N ( 1+ \epsilon) u_{\epsilon}^{ \frac{N}{N-1} }  \} dx & \leq \frac{1}{\iota^{ \frac{N}{N-1} } c_{\epsilon}^{ \frac{N}{N-1} } } \int_{ u_{\epsilon} > \iota c_{\epsilon} } h_{ \epsilon} (x) |u_{\epsilon}|^{ \frac{N}{N-1} } \exp \{ \alpha_N ( 1+ \epsilon) u_{\epsilon}^{ \frac{N}{N-1} }  \} dx\\
     	& \leq \frac{1}{\iota^{ \frac{N}{N-1} } c_{\epsilon}^{ \frac{N}{N-1} } } \int_{ \mathbb{B} } h_{\epsilon} (x) |u_{\epsilon}|^{ \frac{N}{N-1} } \exp \{ \alpha_N ( 1+ \epsilon) u_{\epsilon}^{ \frac{N}{N-1} }  \} dx\\
     	& = \frac{ \lambda{\epsilon} }{ \iota^{ \frac{N}{N-1} } c_{\epsilon}^{ \frac{N}{N-1} } }.
     \end{flalign*}
      Letting $\iota \rightarrow 1$, then we have
      \begin{flalign}
      	\underset{ \epsilon \rightarrow 0}{ \lim \sup } \int_{\mathbb{B}} h_{\epsilon} (x) \exp \{ \alpha_N (1+ \epsilon) u_{\epsilon}^{ \frac{N}{N-1} } \} dx \leq \int_{\mathbb{B}} h_0 (x) dx + \underset{\epsilon \rightarrow 0}{\lim \sup} \frac{\lambda_{\epsilon}}{  c_{\epsilon}^{ \frac{N}{N-1} } }. \label{eq: 2.7}
      \end{flalign}
      On the other hand,  for any fixed $R > 0$, we obtain
      \begin{flalign*}
      	\int_{\mathbb{B}_{R r_{\epsilon}^{ \frac{1}{1+ \epsilon} } } } h_{\epsilon } (x) \exp \{ \alpha_N (1+ \epsilon) u_{\epsilon}^{ \frac{N}{N-1} } \} dx & = \int_{\mathbb{B}_R } |y|^{ N \epsilon } r_{\epsilon}^N \exp \{ \alpha_N (1+ \epsilon) \tilde{u}_{\epsilon}^{ \frac{N}{N-1} } \} dy + o_{\epsilon} (1)\\
      	& = \lambda_{\epsilon} c_{\epsilon}^{- \frac{N}{N-1} } \cdot \int_{\mathbb{B}_R} \exp \{ \alpha_N \frac{N}{N-1} \phi \} dx+o_{\epsilon}(1).
      \end{flalign*}
      Thus,
      \begin{flalign}
      	& \underset{R \rightarrow + \infty}{\lim } \underset{\epsilon \rightarrow 0}{\lim \sup} \int_{\mathbb{B}_{R r_{\epsilon}^{ \frac{1}{1+ \epsilon} } } } |x|^{ N \epsilon } \exp \{ \alpha_N (1+ \epsilon) u_{\epsilon}^{ \frac{N}{N-1} } \} dx  \notag\\
      	& = \underset{R \rightarrow + \infty}{\lim } \underset{\epsilon \rightarrow 0}{\lim \sup} \lambda_{\epsilon} c_{\epsilon}^{- \frac{N}{N-1}  } \cdot \int_{\mathbb{B}_R} \exp \{ \alpha_N \frac{N}{N-1} \phi \} dx \notag \\
      	& \geq \underset{\epsilon \rightarrow 0}{\lim \sup} \frac{\lambda_{\epsilon}}{  c_{\epsilon}^{ \frac{N}{N-1} } }. \label{eq: 2.8}
      \end{flalign}
      The lemma follows from equations \eqref{eq: 2.7} and \eqref{eq: 2.8}.
\end{proof}

\begin{lemma}
	For any $\alpha <  \frac{N}{N-1} $, we have that
	\begin{flalign*}
		\underset{\epsilon \rightarrow 0}{\lim}  \frac{\lambda_{\epsilon}}{c_{\epsilon}^{\alpha}} = + \infty.
	\end{flalign*}
\end{lemma}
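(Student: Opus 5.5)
Suppose, to the contrary, that for some $\alpha<\frac{N}{N-1}$ there is a subsequence along which $\lambda_{\epsilon}/c_{\epsilon}^{\alpha}$ stays bounded. We will test the Euler--Lagrange equation \eqref{eq: 2.1} against $u_{\epsilon}$ restricted to the region where $u_{\epsilon}$ is not large, and play this off against Lemma 2.3 and the fact that $u_0\equiv 0$ (Lemma 2.1).

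\textbf{Step 1 (splitting).} Fix $\iota\in(0,1)$ and write
\[
\lambda_{\epsilon}=\int_{\{u_{\epsilon}\le \iota c_{\epsilon}\}} h_{\epsilon}u_{\epsilon}^{\frac{N}{N-1}}e^{\alpha_N(1+\epsilon)u_{\epsilon}^{\frac{N}{N-1}}}dx+\int_{\{u_{\epsilon}> \iota c_{\epsilon}\}} h_{\epsilon}u_{\epsilon}^{\frac{N}{N-1}}e^{\alpha_N(1+\epsilon)u_{\epsilon}^{\frac{N}{N-1}}}dx .
\]
By Lemma 2.3, $\int_{\mathbb{B}}|\nabla u_{\epsilon,\iota}|^N\to\iota<1$. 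The classical Moser--Trudinger inequality, applied to $u_{\epsilon,\iota}$, then bounds $e^{\alpha_N(1+\epsilon)u_{\epsilon,\iota}^{N/(N-1)}}$ in $L^{q}(\mathbb{B})$ for some $q>1$, uniformly in $\epsilon$. Since $u_{\epsilon,\iota}\to0$ in every $L^p$, Hölder's inequality shows that the first integral tends to $0$. Hence
\[
\lambda_{\epsilon}=\int_{\{u_{\epsilon}>\iota c_{\epsilon}\}}h_{\epsilon}u_{\epsilon}^{\frac{N}{N-1}}e^{\alpha_N(1+\epsilon)u_{\epsilon}^{\frac{N}{N-1}}}dx+o_{\epsilon}(1)\ge \iota^{\frac{N}{N-1}}c_{\epsilon}^{\frac{N}{N-1}}\int_{\{u_{\epsilon}>\iota c_{\epsilon}\}}h_{\epsilon}e^{\alpha_N(1+\epsilon)u_{\epsilon}^{\frac{N}{N-1}}}dx+o_{\epsilon}(1).
\]

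\textbf{Step 2 (the mass on $\{u_{\epsilon}>\iota c_{\epsilon}\}$ does not vanish).} This is the crux. Arguing as in the proof of Lemma 2.4, the contribution of $\{u_{\epsilon}\le\iota c_{\epsilon}\}$ to $\mathcal{J}_{\epsilon}$ converges to $\int_{\mathbb{B}}h_0\,dx$. Combined with the bound on $\liminf_{\epsilon\to0}\mathcal{J}_{\epsilon}$ from Section 2.1, this gives
\[
\liminf_{\epsilon\to0}\int_{\{u_{\epsilon}>\iota c_{\epsilon}\}}h_{\epsilon}e^{\alpha_N(1+\epsilon)u_{\epsilon}^{\frac{N}{N-1}}}dx\ \ge\ \mathcal{J}-\int_{\mathbb{B}}h_0\,dx=:\delta .
\]
We need $\delta>0$. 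Strict positivity should follow from the strict inequality $\mathcal{J}>\int_{\mathbb{B}}h_0\,dx$, which is obtained by testing the functional with a small multiple $tv$ of any fixed $v\not\equiv0$: for small $t$ the value is $\int h_0+ c\,t^{N/(N-1)}+o\bigl(t^{N/(N-1)}\bigr)$ with $c>0$. This is the same positivity already used in Section 2.1 to bound $\lambda_{\epsilon}$ from below.

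\textbf{Step 3 (conclusion).} Steps 1 and 2 give $\lambda_{\epsilon}\ge \delta\,\iota^{\frac{N}{N-1}}c_{\epsilon}^{\frac{N}{N-1}}(1+o_{\epsilon}(1))$. Since $c_{\epsilon}\to+\infty$ and $\alpha<\frac{N}{N-1}$,
\[
\frac{\lambda_{\epsilon}}{c_{\epsilon}^{\alpha}}\ \ge\ \delta\,\iota^{\frac{N}{N-1}}c_{\epsilon}^{\frac{N}{N-1}-\alpha}(1+o_{\epsilon}(1))\ \longrightarrow\ +\infty ,
\]
which contradicts the assumed boundedness.

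The main obstacle is Step 2, namely making rigorous that the part of the functional outside the concentration region converges exactly to $\int h_0$. This needs the uniform $L^q$ bound from Lemma 2.3 together with the hypothesis $h_{\epsilon}\to|x|^{N\epsilon}h_0$. If that route gives trouble, the fallback is to use the blow-up profile of Lemma 2.2 directly. On $\mathbb{B}_{Rr_{\epsilon}^{1/(1+\epsilon)}}$ we have $u_{\epsilon}\ge \iota c_{\epsilon}$, and the rescaled integral of $h_{\epsilon}u_{\epsilon}^{\frac{N}{N-1}}e^{\alpha_N(1+\epsilon)u_{\epsilon}^{\frac{N}{N-1}}}$ equals $\lambda_{\epsilon}\int_{\mathbb{B}_R}e^{\alpha_N\frac{N}{N-1}\phi}(1+o_{\epsilon}(1))$. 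Combined with \eqref{eq: 2.3} and the lower bound $\liminf\lambda_{\epsilon}>0$, this should control $\lambda_{\epsilon}$ relative to $c_{\epsilon}^{\frac{N}{N-1}}$ by comparing it with the same region's contribution to $\mathcal{J}_{\epsilon}$.
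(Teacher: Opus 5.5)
Your argument is correct and is essentially the paper's. The paper assumes $\lambda_\epsilon c_\epsilon^{-N/(N-1)}\to 0$ and contradicts $\mathcal{J}>\int_{\mathbb{B}}h_0\,dx$ via \eqref{eq: 2.7}, which rests on exactly your two ingredients: the bound $\int_{\{u_\epsilon>\iota c_\epsilon\}}h_\epsilon e^{\alpha_N(1+\epsilon)u_\epsilon^{N/(N-1)}}dx\le \lambda_\epsilon(\iota c_\epsilon)^{-N/(N-1)}$, and the convergence of the low part to $\int_{\mathbb{B}}h_0\,dx$. You simply rearrange these into the direct bound $\liminf_{\epsilon\to0}\lambda_\epsilon c_\epsilon^{-N/(N-1)}\ge \iota^{N/(N-1)}\bigl(\mathcal{J}-\int_{\mathbb{B}}h_0\,dx\bigr)>0$, where the $o_\epsilon(1)$ estimate for the low part of $\lambda_\epsilon$ in Step 1 and the fallback paragraph are unnecessary, since dropping a nonnegative term already suffices.
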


\begin{proof}
	If not, then, $\frac{\lambda_{\epsilon}}{c_{\epsilon}^{ \frac{N}{N-1} }} \rightarrow 0$ as $\epsilon \rightarrow 0$. For any $u \in W_0^{1,N} ( \mathbb{B})$ with $|| u||_{ W_0^{1,N} (\mathbb{B}) } = 1$. According to the equations \eqref{eq: 2.1} and \eqref{eq: 2.7}, we have
	\begin{flalign*}
		\int_{\mathbb{B}} h_0 (x) dx \leq \int_{\mathbb{B}} h_0 (x) \exp \{ \alpha_N \frac{N}{N-1} |u|^{ \frac{N}{N-1} }  \} dx \leq \mathcal{J } & \leq \underset{\epsilon \rightarrow 0}{\lim \inf} \int_{\mathbb{B}} h_{\epsilon} (x) \exp \{ \alpha_N (1 + \epsilon) |u_{\epsilon}|^{\frac{N}{N-1} }  \} dx\\
		& \leq \int_{\mathbb{B}} h_0 (x) dx + \underset{\epsilon \rightarrow 0}{\lim \sup} \frac{\lambda_{\epsilon}}{c_{\epsilon}^{ \frac{N}{N-1} }} =\int_{\mathbb{B}} h_0 (x) dx.
	\end{flalign*}
    It is impossible since the left hand integral is stractly greater than $|\mathbb{B}|$. This result is proved.
\end{proof}

We shall now discuss the convergence of $c_{\epsilon}^{ \frac{1}{N-1} } u_{\epsilon}$ under the assumption $c_{\epsilon} \rightarrow + \infty$.

\begin{lemma} \label{lem: 2.6}
	The sequence satisfies the following relations
	\begin{flalign}
		& c_{\epsilon}^{\frac{1}{N-1}} u_{\epsilon} \rightharpoonup G_0, ~ W_0^{1, q} (\mathbb{B}), ~ 1< q < N, \notag \\
		&c_{\epsilon}^{\frac{1}{N-1}} u_{\epsilon} \rightarrow G_0, ~ L^p (\mathbb{B}), ~ 1 < p< \frac{Nq}{N-q}, \notag \\
		&c_{\epsilon }^{ \frac{1}{N-1}} u_{\epsilon} \rightarrow G_0,~ C_{loc}^1 (\mathbb{B} \setminus \{0\}). \label{eq: 2.9}
	\end{flalign}
    Here $G_0$ is a Green's function and satisfies $- \operatorname{div} (| \nabla G_0|^{N-2} \cdot \nabla G_0) = \delta_0$ in the distributional sense, where $\delta_0$ is the usual Dirac measure centered at $0$.
\end{lemma}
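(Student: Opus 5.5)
We argue along the standard route (Struwe's lemma for $N$-Laplacian equations with $L^1$ right-hand side, as in Li / Yang), applied to $w_\epsilon := c_\epsilon^{\frac{1}{N-1}} u_\epsilon$. From \eqref{eq: 2.1}, $w_\epsilon$ satisfies
\begin{flalign*}
-\operatorname{div}\left(|\nabla w_\epsilon|^{N-2}\nabla w_\epsilon\right) = f_\epsilon := \frac{c_\epsilon}{\lambda_\epsilon} h_\epsilon(x)\, u_\epsilon^{\frac{1}{N-1}} e^{\alpha_N(1+\epsilon)u_\epsilon^{\frac{N}{N-1}}} \quad \text{in } \mathbb{B}, \qquad w_\epsilon = 0 \text{ on } \partial\mathbb{B}.
\end{flalign*}

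\emph{Step 1: $f_\epsilon$ is bounded in $L^1(\mathbb{B})$ and $f_\epsilon\,dx \rightharpoonup \delta_0$.} Split $\mathbb{B}$ into $\{u_\epsilon > \iota c_\epsilon\}$ and $\{u_\epsilon \le \iota c_\epsilon\}$.

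On $\{u_\epsilon\le \iota c_\epsilon\}$, the exponential satisfies $e^{\alpha_N(1+\epsilon)u_{\epsilon,\iota}^{N/(N-1)}}$. By the lemma on $u_{\epsilon,\iota}$, $\|\nabla u_{\epsilon,\iota}\|_N^N\to\iota<1$, so the Moser–Trudinger inequality bounds this exponential in $L^s$ for some $s>1$. Combined with $u_\epsilon \to 0$ in $L^p$, the contribution is $O(c_\epsilon/\lambda_\epsilon)\cdot o(1)$. This tends to $0$ by the previous lemma (take $\alpha=1<\frac{N}{N-1}$).

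On $\{u_\epsilon>\iota c_\epsilon\}$, use $c_\epsilon \le u_\epsilon/\iota$. This gives
\begin{flalign*}
\int_{\{u_\epsilon>\iota c_\epsilon\}} f_\epsilon\, dx \le \frac{1}{\iota\lambda_\epsilon}\int_{\mathbb{B}} h_\epsilon\, u_\epsilon^{\frac{N}{N-1}} e^{\alpha_N(1+\epsilon)u_\epsilon^{\frac{N}{N-1}}}dx = \frac{1}{\iota}.
\end{flalign*}
The set $\{u_\epsilon>\iota c_\epsilon\}$ concentrates at $0$, since $u_\epsilon\to 0$ a.e. and $u_\epsilon$ is radial and decreasing. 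Hence $\limsup\int_{\mathbb{B}\setminus \mathbb{B}_\delta} f_\epsilon \to 0$ for each $\delta>0$.

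For the lower bound, rescale on $\mathbb{B}_{R r_\epsilon^{1/(1+\epsilon)}}$. By the lemma on $\phi_\epsilon$ and \eqref{eq: 2.3}, $\int f_\epsilon \ge 1-o_R(1)-o_\epsilon(1)$ on this ball. Letting $\iota\to1$ gives $\int_{\mathbb{B}} f_\epsilon\,dx\to 1$ and $f_\epsilon\,dx\rightharpoonup\delta_0$.

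\emph{Step 2: uniform $W^{1,q}_0$ bounds and weak limit.} With $\|f_\epsilon\|_{L^1}\le C$, apply Struwe's truncation argument. Test the equation with $T_k(w_\epsilon)=\min\{w_\epsilon,k\}$ and with $\min\{(w_\epsilon-k)^+,1\}$ to estimate the level sets $\{k\le w_\epsilon\le k+1\}$. This yields $\|\nabla w_\epsilon\|_{L^q(\mathbb{B})}\le C(q)$ for every $1<q<N$. Radial symmetry can simplify this: integrating the ODE $-(r^{N-1}|w_\epsilon'|^{N-2}w_\epsilon')' = r^{N-1}f_\epsilon$ gives $|w_\epsilon'(r)|^{N-1}\le C r^{1-N}/\omega_N$, hence $|\nabla w_\epsilon|\le C|x|^{-1}$, which lies in $L^q$ for $q<N$.

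Extract a subsequence with $w_\epsilon\rightharpoonup G_0$ weakly in $W^{1,q}_0$; Rellich then gives strong $L^p$ convergence for $p<\frac{Nq}{N-q}$. To pass to the limit in the nonlinear operator, show $\nabla w_\epsilon\to\nabla G_0$ a.e. In the radial case this follows directly from the ODE representation
\begin{flalign*}
|w_\epsilon'(r)|^{N-2}w_\epsilon'(r) = -r^{1-N}\int_0^r s^{N-1}f_\epsilon(s)\,ds,
\end{flalign*}
whose right side converges to $-r^{1-N}/\omega_N$ for every $r>0$. This identifies $G_0$ as the radial Green's function, $-\operatorname{div}(|\nabla G_0|^{N-2}\nabla G_0)=\delta_0$.

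\emph{Step 3: $C^1_{loc}(\mathbb{B}\setminus\{0\})$ convergence.} On any annulus $\mathbb{B}\setminus\mathbb{B}_\delta$, $u_\epsilon\le u_\epsilon(\delta)\to0$ uniformly. This follows from radial monotonicity together with $L^p$ convergence to $0$, or from the bound $|\nabla w_\epsilon| \le C/|x|$, which gives $u_\epsilon(\delta)\le C c_\epsilon^{-1/(N-1)}$. Consequently $f_\epsilon\le C c_\epsilon/\lambda_\epsilon\cdot o(1)$, so $f_\epsilon$ is bounded in $L^\infty$ (indeed $\to0$) there. Tolksdorf/DiBenedetto $C^{1,\gamma}$ estimates, or the ODE formula above, then give $C^1$ convergence on compact subsets of $\overline{\mathbb{B}}\setminus\{0\}$.

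\emph{Main obstacle.} The hard part is Step 1, namely the bound and concentration of the $L^1$ mass of $f_\epsilon$. This requires the previous lemma ($\lambda_\epsilon/c_\epsilon\to\infty$) to kill the region $\{u_\epsilon\le\iota c_\epsilon\}$, together with care in controlling $h_\epsilon$ uniformly near the origin.
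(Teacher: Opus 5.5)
Your proposal is correct and follows essentially the same route as the paper. You split at the level $\iota c_\epsilon$, use $\lambda_\epsilon/c_\epsilon\to+\infty$ on $\{u_\epsilon\le\iota c_\epsilon\}$ and the blow-up profile on $\mathbb{B}_{R r_\epsilon^{1/(1+\epsilon)}}$ to get $f_\epsilon\,dx\rightharpoonup\delta_0$, and then obtain Struwe/Yang-type $W_0^{1,q}$ bounds and elliptic estimates away from the origin. Your version is tighter than the paper's in two places: you apply Moser--Trudinger to the truncation $u_{\epsilon,\iota}$, whose energy tends to $\iota<1$, rather than to $u_\epsilon$ itself as the paper's H\"older step does, and you use the correct right-hand side $\frac{c_\epsilon}{\lambda_\epsilon}h_\epsilon u_\epsilon^{\frac{1}{N-1}}e^{\alpha_N(1+\epsilon)u_\epsilon^{\frac{N}{N-1}}}$ together with the radial ODE to get $|\nabla w_\epsilon|\le C|x|^{-1}$ and identify $G_0$ explicitly, where the paper only appeals to Yang's truncation argument.
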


\begin{proof}
	Let 
	\begin{flalign*}
		g_{\epsilon} (x) = \lambda_{\epsilon}^{-1} h_{\epsilon}(x) c_{\epsilon}^{ \frac{1}{N-1} } u_{\epsilon} \exp \{ \alpha_{\epsilon} ( 1 + \epsilon) |u_{\epsilon}|^{ \frac{N}{N-1} } \}.
	\end{flalign*}
    For any $\varphi \in C_0^{\infty} ( \mathbb{B} )$, we have 
    \begin{flalign*}
    	\underset{ \epsilon \rightarrow 0}{\lim } \int_{ \mathbb{B}} g_{\epsilon} (x) \varphi(x) dx = \varphi(0). 
    \end{flalign*}
    We divide the integral into two parts, i.e,
    \begin{flalign*}
    	\int_{\mathbb{B}} g_{\epsilon} (x) \varphi (x) dx = \int_{u_{\epsilon } \leq \iota c_{\epsilon} } g_{\epsilon} (x) \varphi (x) dx + \int_{u_{\epsilon } > \iota c_{\epsilon}} g_{\epsilon} (x) \varphi (x) dx.
    \end{flalign*}
    We estimate the two integrals on the above equation. For the first part, we obtain
    \begin{flalign*}
    	\int_{ u_{\epsilon} \leq \iota c_{\epsilon} } g_{\epsilon} (x) \varphi (x) dx & = \int_{ u_{\epsilon} \leq \iota c_{\epsilon} } \lambda_{\epsilon}^{-1} h_{\epsilon} (x) c_{\epsilon}^{ \frac{1}{N-1} } u_{\epsilon} \exp \{ \alpha_{\epsilon} ( 1 + \epsilon) |u_{\epsilon}|^{ \frac{N}{N-1} } \} \varphi(x) dx\\
    	& \leq \frac{ c_{\epsilon}^{ \frac{1}{N-1} } }{ \lambda_{\epsilon} } \left(\underset{\mathbb{B}}{\sup} \varphi(x) \right) \int_{ \mathbb{B} } h_{\epsilon} (x) u_{\epsilon} \exp \{ \alpha_{\epsilon} ( 1 + \epsilon) |u_{\epsilon}|^{ \frac{N}{N-1} } \} dx.
    \end{flalign*}
    Note that $u_{\epsilon} \rightarrow u_0$ strongly in $L^p (\mathbb{B})$ for $p>1$, then by H\"{o}lder inequality, we have
    \begin{flalign*}
    	& \int_{ \mathbb{B} } h_{\epsilon} (x) u_{\epsilon} \exp \{ \alpha_{\epsilon} ( 1 + \epsilon) |u_{\epsilon}|^{ \frac{N}{N-1} } \} dx\\
    	& \leq \left( \int_{ \mathbb{B} } h_{\epsilon} (x)  \exp \{ \alpha_{\epsilon} p ( 1 + \epsilon) |u_{\epsilon}|^{ \frac{N}{N-1} } \} dx \right)^{ \frac{1}{p} } \left( \int_{ \mathbb{B} } h_{\epsilon} (x) |u_{\epsilon}|^q   dx \right)^{ \frac{1}{q} } \leq C,
    \end{flalign*}
    where $\frac{1}{p} + \frac{1}{q} =1$. According to the lemma above, we have
    \begin{flalign*}
    	\int_{ u_{\epsilon} \leq \iota c_{\epsilon} } g_{\epsilon} (x) \varphi (x) dx = o_{\epsilon} (1).
    \end{flalign*}
    By the lemma \ref{lem: 2.6}, it follows that $\mathbb{B}_{R_{ \epsilon }^{ \frac{1}{1+ \epsilon} } } \subset \{ u_{\epsilon} > \iota \epsilon \}$ for $\epsilon$ small enough. Then we can implies 
    \begin{flalign*}
    	\int_{ \mathbb{B}_{R_{ \epsilon }^{ \frac{1}{1+ \epsilon} } } \cap \{ u_{\epsilon} > \iota \epsilon \} } g_{\epsilon} (x) \varphi (x) dx & = \int_{ \mathbb{B}_{R_{ \epsilon }^{ \frac{1}{1+ \epsilon} } } } g_{\epsilon} (x) \varphi (x) dx\\
    	& = \int_{ \mathbb{B}_{R_{ \epsilon }^{ \frac{1}{1+ \epsilon} } } } \lambda_{\epsilon}^{-1} h_{\epsilon} (x) c_{\epsilon}^{ \frac{1}{N-1} } u_{\epsilon} \exp \{ \alpha_{\epsilon} ( 1 + \epsilon) |u_{\epsilon}|^{ \frac{N}{N-1} } \} \varphi (x) dx\\
    	& = c_{\epsilon}^{- \frac{1}{N-1}  } \cdot\\
    	& \int_{\mathbb{B}_R } |y|^{N \epsilon } \tilde{u}_{\epsilon} \exp \{ \alpha_N (1+ \epsilon) |\tilde{u}_{\epsilon}|^{ \frac{N}{N-1} } - \alpha_N ( 1+ \epsilon) c_{\epsilon}^{ \frac{N}{N-1} }  \}  \tilde{\varphi}  dy\\
    	& + o_{\epsilon}(1),
    \end{flalign*}
    where $\tilde{\varphi} = \varphi( r_{\epsilon}^{ \frac{1}{1 + \epsilon} } x )$. From the remark \ref{rem: 2.1}, we have that
    \begin{flalign*}
    	\int_{ \mathbb{B}_{R_{ \epsilon }^{ \frac{1}{1+ \epsilon} } } \cap \{ u_{\epsilon} > \iota \epsilon \} } g_{\epsilon} (x) \varphi (x) dx & = \varphi(0) \left( 1 + o_{\epsilon} (1) \right) \left( 1 + o_{R} (1) \right) \\
    	& =  \varphi(0) \left( 1 + o_{\epsilon} (1) + o_{R} (1) \right).
    \end{flalign*}
    On the other hand, we obtain
    \begin{flalign*}
    	\int_{\{ u_{\epsilon} > \iota c_{\epsilon} \}\setminus \mathbb{B}_{ R_{ r_{\epsilon}^{ \frac{1}{1 + \epsilon} } } }  }  g_{\epsilon} (x) \varphi(x) dx &  \leq \frac{1}{\iota}
    	\left( \underset{\mathbb{B}}{\sup} |\varphi(x)| \right) \lambda_{\epsilon}^{-1} \cdot\\
    	& \int_{\{ u_{\epsilon} > \iota c_{\epsilon} \}\setminus \mathbb{B}_{ R_{ r_{\epsilon}^{ \frac{1}{1 + \epsilon} } } }  } |x|^{N \epsilon} c_{\epsilon}^{ \frac{1}{N-1} } u_{\epsilon} \exp \{ \alpha_{\epsilon} ( 1 + \epsilon) |u_{\epsilon}|^{ \frac{N}{N-1} } \} dx \\
    	& \leq \frac{1}{\iota}
    	\left( \underset{\mathbb{B}}{\sup} |\varphi(x)| \right) \left( 1- \int_{\mathbb{B}_R} \exp \{ \alpha_N \frac{N}{N-1} \phi \}dx + o_{\epsilon} (1)    \right)\\
    	& = o_{\epsilon} (1) + o_R (1).
    \end{flalign*}  
    We have
    \begin{flalign*}
    	\underset{\epsilon \rightarrow 0}{\lim} \int_{u_{\epsilon} > \iota c_{\epsilon}} g_{\epsilon} (x) \varphi (x) dx = \varphi(0),
    \end{flalign*}
    by $R \rightarrow + \infty$.
    Then $\varphi(x)$ is the Dirac function. Multiplying both sides of the equation \eqref{eq: 2.1} by $c_{\epsilon}^{ \frac{1}{N-1} }$, we have 
    \begin{flalign*}
    	- \operatorname{div} (|\nabla c_{\epsilon}^{ \frac{1}{N-1} } u_{\epsilon}|^{N-2} \cdot  \nabla c_{\epsilon}^{ \frac{1}{N-1} } u_{\epsilon}) = \frac{1}{\lambda_{\epsilon}} h_{\epsilon} (x) c_{\epsilon}^{ \frac{1}{N-1} } u_{\epsilon}  \exp \{ \alpha_N (1 + \epsilon) |u_{\epsilon}|^{\frac{N}{N-1} } \}, ~ \text{in} ~ \mathbb{B}.
    \end{flalign*}
    Then we conclude that $h_{\epsilon} (x)$ is bounded in $L^1_{loc}(\mathbb{B})$. 
    Applying the results above, similar methods of proof in \cite{10}, and the Sobolev embedding
    theorem, we obtain a subsequence $\{ u_{\epsilon} \}_{\epsilon}$ satisfying 
    \begin{equation*}
    	\begin{cases}
    		& c_{\epsilon}
    		^{
    			\frac{1}{N-1}
    		} u_{\epsilon} \rightharpoonup G_0, ~ \text{in}
    		~ W_0^{1,q}(\mathbb{B}), ~ \forall 1 < q < N;		\\
    		&c_{\epsilon}
    		^{
    			\frac{1}{N-1}
    		} u_{\epsilon} \rightarrow G_0, ~ \text{in}
    		~ L^{s}(\mathbb{B}), ~ \forall 1 \leq s \leq \frac{Nq}{N-q};	\\
    		&c_{\epsilon}
    		^{
    			\frac{1}{N-1}
    		} u_{\epsilon} \rightarrow G_0, ~ \text{in}
    		~ C^1_{loc} \left(\overline{\mathbb{B}} \setminus \{0\} \right),
    	\end{cases}
    \end{equation*}
    where $G_0$ is a weak solution of 
    \begin{equation*}
    	- \operatorname{div} \left( |\nabla G_0|^{N-2} \cdot \nabla G_0 \right) = \delta_0 ~ \text{in} ~ \mathbb{B}.
    \end{equation*}

    We proceed to analyze the convergence of $u_{\epsilon} $ away from zero. Moreover,  $G_0$ takes the form 
    \begin{flalign*}
    	G_0 = - \frac{N}{ \alpha_N } \ln |x| + A_0 + \beta (x)
    \end{flalign*}
     by \cite{11}, where $A_0$ is a constant, $\beta (x) \in C^{1, \gamma} (\mathbb(\overline{B}) )$ for $0 < \gamma <1$, $\gamma(0) = 0 $.
\end{proof}
    
\begin{lemma}
     Let $\zeta_{\epsilon} (x) \in W_0^{1,N} ( \mathbb{B} )$ with $\int_{\mathbb{B} } |\zeta_{\epsilon} (x)|^N dx \leq 1$, $ \zeta_{\epsilon} (x) \rightharpoonup 0$ weakly in $W_0^{1,N} ( \mathbb{B} )$ as $\epsilon \rightarrow 0$. $ \zeta_{\epsilon} (x) $ is nonnegative and radially symmetric. Then we have 
     \begin{flalign*}
     	\underset{\epsilon \rightarrow 0}{ \lim \sup } \int_{\mathbb{B}} h_{\epsilon} (x) \left( \exp \{ \alpha_N (1 + \epsilon) |\zeta_{\epsilon}|^{ \frac{N}{N-1} } \} -1 \right) dx \leq \frac{\omega_N}{N} \exp \left\lbrace \alpha_N A_0 + \sum_{j=1}^{N-1} \frac{1}{j} \right\rbrace  .
     \end{flalign*}
\end{lemma}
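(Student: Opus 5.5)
The plan is to reduce the statement to the one-dimensional Carleson--Chang estimate. The singular weight will be absorbed by an exact rescaling of the variable, which works only because $\zeta_\epsilon$ is radial. Throughout I read the hypothesis as $\int_{\mathbb{B}}|\nabla\zeta_\epsilon|^N dx\le 1$, since that is what the weak convergence in $W_0^{1,N}(\mathbb{B})$ refers to.

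\emph{Step 1: localization.} Fix $\delta\in(0,1)$. Since $\zeta_\epsilon$ is radial and bounded in $W_0^{1,N}(\mathbb{B})$, the radial lemma gives
\[
|\zeta_\epsilon(x)|\le C\,\bigl(\ln(1/|x|)\bigr)^{\frac{N-1}{N}} \quad \text{on } \mathbb{B}\setminus\mathbb{B}_\delta .
\]
Combined with $\zeta_\epsilon\rightharpoonup0$, this should give $\zeta_\epsilon\to0$ uniformly on $\overline{\mathbb{B}}\setminus\mathbb{B}_\delta$. Hence the part of the integral over $\mathbb{B}\setminus\mathbb{B}_\delta$ is $o_\epsilon(1)$, because $h_\epsilon$ is uniformly bounded there.

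On $\mathbb{B}_\delta$ I use $\lim_{x\to0}h_\epsilon(x)|x|^{-N\epsilon}=1$. I would first try to take this uniformly in $\epsilon$, to get $h_\epsilon(x)\le(1+\eta(\delta))|x|^{N\epsilon}$ with $\eta(\delta)\to0$. This uniformity is where the hypotheses on $h_\epsilon$ must be used carefully, and I expect it to be the main technical point: as written, the assumption is a pointwise limit for each fixed $\epsilon$. It therefore suffices to bound
\[
\limsup_{\epsilon\to0}\int_{\mathbb{B}}|x|^{N\epsilon}\Bigl(e^{\alpha_N(1+\epsilon)\zeta_\epsilon^{\frac{N}{N-1}}}-1\Bigr)dx
\]
and then let $\delta\to0$.

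\emph{Step 2: Moser's change of variables.} Set $|x|=e^{-t/N}$ and $w_\epsilon(t)=\alpha_N^{\frac{N-1}{N}}\zeta_\epsilon(x)$. Then $w_\epsilon(0)=0$ and $\int_0^\infty|w_\epsilon'|^N dt=\int_{\mathbb{B}}|\nabla\zeta_\epsilon|^N dx\le1$. The radial integral becomes
\[
\frac{\omega_N}{N}\int_0^\infty\Bigl(e^{(1+\epsilon)w_\epsilon^{\frac{N}{N-1}}}-1\Bigr)e^{-(1+\epsilon)t}\,dt .
\]
Next substitute $s=(1+\epsilon)t$ and $v_\epsilon(s)=(1+\epsilon)^{\frac{N-1}{N}}w_\epsilon\bigl(s/(1+\epsilon)\bigr)$. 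A direct computation gives $\int_0^\infty|v_\epsilon'|^N ds=\int_0^\infty|w_\epsilon'|^N dt\le1$ and $(1+\epsilon)w_\epsilon^{\frac{N}{N-1}}=v_\epsilon^{\frac{N}{N-1}}$. The quantity is therefore exactly
\[
\frac{\omega_N}{N(1+\epsilon)}\int_0^\infty\Bigl(e^{v_\epsilon^{\frac{N}{N-1}}}-1\Bigr)e^{-s}\,ds ,
\]
so the conical weight disappears and costs only the harmless factor $(1+\epsilon)^{-1}$.

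\emph{Step 3: Carleson--Chang.} The weak convergence $\zeta_\epsilon\rightharpoonup0$, together with the uniform convergence on compact subsets of $(0,\infty)$ from Step 1, means $v_\epsilon\to0$ locally uniformly in $s$. So $\{v_\epsilon\}$ is a concentrating sequence in the sense of Carleson--Chang, and their theorem (the version of Lemma 1 in their paper, extended to $N$ dimensions) gives
\[
\limsup_{\epsilon\to0}\int_0^\infty\Bigl(e^{v_\epsilon^{\frac{N}{N-1}}}-1\Bigr)e^{-s}\,ds\le e^{\sum_{j=1}^{N-1}\frac1j}.
\]

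\emph{Step 4: the constant $A_0$.} On the unit ball the radial Green function is exactly $G_0=-\frac{N}{\alpha_N}\ln|x|$. Hence in the expansion from Lemma~\ref{lem: 2.6} we have $A_0=0$ (and in any case $A_0\ge0$). The bound from Step 3 is therefore $\frac{\omega_N}{N}e^{\alpha_NA_0+\sum_{j=1}^{N-1}\frac1j}$. Letting $\delta\to0$ in Step 1 finishes the argument.
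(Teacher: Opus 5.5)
Your argument follows the paper's route. Up to the factor $\alpha_N^{\frac{N-1}{N}}$, your $v_\epsilon$ is the paper's $\rho_\epsilon(\mu)=(1+\epsilon)^{\frac{N-1}{N}}\zeta_\epsilon(\mu^{\frac{1}{1+\epsilon}})$ written in Moser's variable $\mu=e^{-s/N}$. This is the energy-preserving rescaling that trades the weight $|x|^{N\epsilon}$ for the factor $(1+\epsilon)^{-1}$, and it is followed, as in the paper, by the Carleson--Chang bound (which the paper takes from Yang's work).

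Your Steps 1 and 4 are more careful than the paper. The paper's displayed substitution leaves $h_\epsilon$ on the right-hand side. The exact computation (your Step 2) removes the weight, so it needs the comparison $h_\epsilon\le(1+o(1))|x|^{N\epsilon}$ near $0$ that you isolate. The paper also never notes that $A_0=0$ on the ball.

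The uniformity in $\epsilon$ that you flag is genuinely needed. Take $h_\epsilon=|x|^{N\epsilon}$ on $\mathbb{B}_{\eta_\epsilon}$ and $h_\epsilon=M|x|^{N\epsilon}$ outside $\mathbb{B}_{2\eta_\epsilon}$, with $\eta_\epsilon\to0$ fast. Moser's functions concentrating at scales $\gg\eta_\epsilon$ then give a $\limsup$ of at least $M\frac{\omega_N}{N}$. This exceeds the bound once $M>e^{\sum_{j=1}^{N-1}\frac1j}$, so the uniformity has to be read into the hypotheses.

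One step fails as written, in Step 3. Locally uniform convergence $v_\epsilon\to0$ does not make $\{v_\epsilon\}$ concentrating in the Carleson--Chang sense; for a sequence bounded in energy, that convergence is just weak convergence. Concentration means $\int_0^A|v_\epsilon'|^N\,ds\to0$ for every $A$, and weak convergence does not give this. For example, $v_k(s)=c\,k^{-1}\sin(ks)$ on $[0,1]$, constant afterwards, keeps a fixed amount of energy on $[0,1]$.

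The repair is short, and you have two options.
\begin{itemize}
\item Quote the weakly-null form of the result, which is what the paper uses: if $\|\nabla u_\epsilon\|_{L^N}\le1$ and $u_\epsilon\rightharpoonup0$ in $W_0^{1,N}(\mathbb{B})$, then $\limsup_{\epsilon\to0}\int_{\mathbb{B}}(e^{\alpha_N|u_\epsilon|^{\frac{N}{N-1}}}-1)\,dx\le\frac{\omega_N}{N}e^{\sum_{j=1}^{N-1}\frac1j}$.
\item Split into cases. If $\int_0^A|v_\epsilon'|^N\,ds\ge\eta>0$ along a subsequence, Hölder's inequality and $v_\epsilon(A)\to0$ give $v_\epsilon(s)^{\frac{N}{N-1}}\le(1-\theta)s+o_\epsilon(1)$ for $s\ge A$, with some $\theta>0$. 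Dominated convergence then sends $\int_0^\infty(e^{v_\epsilon^{N/(N-1)}}-1)e^{-s}\,ds$ to $0$. Otherwise the sequence concentrates and Carleson--Chang applies as you intended.
\end{itemize}
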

 
\begin{proof}
     By the radial symmetry of $\zeta_{\epsilon} (x)$, we have $ \zeta_{\epsilon} (x) = \zeta_{\epsilon} (r)$ with $r=|x|$, and we make the change of variables,
     \begin{flalign*}
     	\rho_{\epsilon} (r) = (1 + \epsilon)^{ \frac{N-1}{N} } \zeta_{\epsilon} \left( r^{ \frac{1}{ 1 + \epsilon} } \right).
     \end{flalign*}
    A straightforward calculation shows that
     \begin{flalign*}
     	\int_{\mathbb{B}} | \nabla \rho_{\epsilon} (x)|^N dx & = \int_{[0, \pi]^{N-2} \times [0, 2 \pi]} d \omega(\theta, \phi) \int_0^1 | \nabla (1 + \epsilon)^{ \frac{N-1}{N} } \zeta_{\epsilon} \left( s \right)|^N r^{N-1}dr\\
     	& = \omega_N \int_0^1 | \zeta_{\epsilon}^{ \prime } (r) |^N r^{N-1} dr\\
     	& = \int_{\mathbb{B}} | \nabla \zeta_{\epsilon} \left( x \right)|^N dx.
     \end{flalign*}
     Then above estimate implies that $|| \rho_{\epsilon} ||_{ W_0^{1, N}  \left( \mathbb{B} \right) }=   || \zeta_{\epsilon} ||_{ W_0^{1, N}  \left( \mathbb{B} \right) } \leq 1$. Passing to a subsequence, we may assume that 
     \begin{equation*}
     	\begin{cases}
     		& \rho_{\epsilon} \rightharpoonup \rho_* , ~ W_0^{1, N} \left( \mathbb{B} \right);\\
     		& \rho_{\epsilon} \rightarrow \rho_* , ~ L^p \left( \mathbb{B}  \right), ~ \forall p > 0;\\
     		& \rho_{\epsilon} \rightarrow \rho_* , ~ \text{a.e.} ~ \mathbb{B}.
     	\end{cases}
     \end{equation*}
     It is obvious to see $\zeta_{\epsilon} \rightarrow 0$ a.e. in $\mathbb{B}$, and then it follows that $ \rho_* \rightarrow 0$ a.e. in $\mathbb{B}$. Furthermore, by a change of variable $r = \mu^{ \frac{1}{1 + \epsilon} }$, it holds that
     \begin{flalign*}
     	\int_{ \mathbb{B}} h_{\epsilon} (x) \left\lbrace \exp \{ \alpha_N \left( 1 + \epsilon \right) \zeta^{ \frac{N}{N-1} }_{ \epsilon } (x)  \} - 1 \right\rbrace dx  & = \omega_N \int_0^1 h_{\epsilon} (r)|r|^{ N-1 } \left\lbrace \exp \{ \alpha_N \left( 1 + \epsilon \right) \zeta^{ \frac{N}{N-1} }_{ \epsilon }
        (r)  \} - 1 \right\rbrace dr\\
         & \leq \frac{1}{1 + \epsilon} \int_{\mathbb{B}} h_{\epsilon} (x)\left\lbrace \exp \{ \alpha_N \rho_{\epsilon}^{ \frac{N}{N-1} } (x) -1  \} \right\rbrace dx\\
         & \leq \frac{1}{1+ \epsilon} \frac{\omega_N}{N} \exp \left\lbrace \alpha_N A_0 + \sum_{j=1}^{N-1} \frac{1}{j} \right\rbrace ,
     \end{flalign*}
     where $A_0$ is contained in the Green function. It follows from that a result of Yang's reference \cite{10} that
     \begin{flalign*}
     	\underset{\epsilon \rightarrow 0}{\lim \sup} \int_{ \mathbb{B}} h_{\epsilon} (x) \exp \left\lbrace \alpha_N  |u_{\epsilon}|^{ \frac{N}{N-1} } \right\rbrace dx \leq  \int_{\mathbb{B}} h_0 (x) dx  + \frac{\omega_N}{N} \exp \left\lbrace \alpha_N A_0 + \sum_{j=1}^{N-1} \frac{1}{j} \right\rbrace  .
     \end{flalign*}
     Also we have 
     \begin{flalign*}
     	\underset{\epsilon \rightarrow 0}{\lim \sup} \int_{ \mathbb{B}} h_{\epsilon} (x) \left\lbrace \exp \{ \alpha_N \left( 1 + \epsilon \right) \zeta^{ \frac{N}{N-1} }_{ \epsilon } (x)  \} - 1 \right\rbrace dx  \leq \frac{\omega_N}{N} \exp \left\lbrace \alpha_N A_0 + \sum_{j =1}^{N-1} \frac{1}{j} \right\rbrace .
     \end{flalign*}
\end{proof}

\begin{lemma} \label{lem: 2.8}
	There holds 
	\begin{flalign*}
		\underset{u \in W_0^{1,N} (\mathbb{B}), ||u||_{ W_0^{1,N} ( \mathbb{B} ) } \leq 1 }{\sup} \int_{ \mathbb{B}} h_{\epsilon} (x) \exp \left\lbrace \alpha_N |u|^{ \frac{N}{N-1} } \right\rbrace dx \leq  \int_{\mathbb{B}} h_0 (x) dx +  \frac{\omega_N}{N}  \exp \left\lbrace 3 \alpha_N A_0 + \sum_{j=1}^{N-1} \frac{1}{j} \right\rbrace.
	\end{flalign*}
\end{lemma}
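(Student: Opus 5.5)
We plan to argue under the standing blow-up assumption $c_{\epsilon}\to+\infty$. The idea is to reduce the supremum to the limit of the maximizers and then bound that limit with the blow-up analysis above. Since $u_{\epsilon}$ attains $\mathcal{J}_{\epsilon}$, and by the lower semicontinuity argument at the beginning of Section 2 the limiting supremum $\mathcal{J}$ is at most $\liminf_{\epsilon\to0}\mathcal{J}_{\epsilon}$, it suffices to estimate
\begin{flalign*}
\limsup_{\epsilon\to 0}\int_{\mathbb{B}} h_{\epsilon}(x)\exp\{\alpha_N(1+\epsilon)u_{\epsilon}^{\frac{N}{N-1}}\}dx .
\end{flalign*}
The integrand for a fixed $u$ with $\|u\|_{W_0^{1,N}(\mathbb{B})}\le1$ is dominated by the one with exponent $\alpha_N(1+\epsilon)$. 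So the supremum on the left-hand side is controlled by $\mathcal{J}_{\epsilon}$, up to a term $o_{\epsilon}(1)$, once we pass to the limit.

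\textbf{Step 1: reduce to the concentration region.} We apply the estimate \eqref{eq: 2.7} (from the lemma splitting at level $\iota c_{\epsilon}$). It bounds this $\limsup$ by $\int_{\mathbb{B}}h_0\,dx+\limsup_{\epsilon\to0}\lambda_{\epsilon}c_{\epsilon}^{-\frac{N}{N-1}}$. Hence everything reduces to an upper bound for $\lambda_{\epsilon}c_{\epsilon}^{-\frac{N}{N-1}}$.

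\textbf{Step 2: capacity estimate on an annulus.} This is the core step, and we expect it to be the main obstacle. We compare energies on the annulus $\mathbb{B}_{\delta}\setminus\mathbb{B}_{Rr_{\epsilon}^{1/(1+\epsilon)}}$, following Carleson--Chang and Yang's argument in \cite{10}.
\begin{itemize}
\item By Lemma \ref{lem: 2.6}, outside $\mathbb{B}_\delta$ we have $c_{\epsilon}^{\frac{1}{N-1}}u_{\epsilon}\to G_0$ in $C^1$. Together with the expansion $G_0=-\frac{N}{\alpha_N}\ln|x|+A_0+\beta(x)$, this gives
\begin{flalign*}
\int_{\mathbb{B}\setminus\mathbb{B}_\delta}|\nabla u_{\epsilon}|^N dx=c_{\epsilon}^{-\frac{N}{N-1}}\Big(-\tfrac{N}{\alpha_N}\ln\delta+A_0+o_\delta(1)+o_{\epsilon}(1)\Big).
\end{flalign*}
\item By the bubble convergence $\phi_{\epsilon}\to\phi$ (Lemma 2.2 and Remark \ref{rem: 2.1}), the energy in $\mathbb{B}_{Rr_{\epsilon}^{1/(1+\epsilon)}}$ is an explicit quantity, namely $c_{\epsilon}^{-\frac{N}{N-1}}$ times a term involving $\frac{N-1}{\alpha_N}\ln R$ and the constant $\sum_{j=1}^{N-1}\frac1j$ coming from $\phi$.
\item On the annulus, $u_{\epsilon}$ runs from $c_{\epsilon}+c_{\epsilon}^{-\frac{1}{N-1}}\phi(R)$ down to $c_{\epsilon}^{-\frac{1}{N-1}}(G_0(\delta)+o(1))$. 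Its energy there is at least that of the $N$-harmonic radial minimizer, which is $(\text{oscillation})^N\big/\big(\omega_N^{\frac{1}{N-1}}\ln\frac{\delta}{Rr_{\epsilon}^{1/(1+\epsilon)}}\big)^{N-1}$.
\item Using $\|\nabla u_{\epsilon}\|_N=1$, we expand to first order in $c_{\epsilon}^{-\frac{N}{N-1}}$ and insert $\ln r_{\epsilon}=\frac1N\ln\lambda_{\epsilon}-\frac{1}{N-1}\ln c_{\epsilon}-\frac{\alpha_N(1+\epsilon)}{N}c_{\epsilon}^{\frac{N}{N-1}}$. The leading terms cancel, and we are left with
\begin{flalign*}
\ln\big(\lambda_{\epsilon}c_{\epsilon}^{-\frac{N}{N-1}}\big)\le \alpha_N A_0+\sum_{j=1}^{N-1}\tfrac1j+\ln\tfrac{\omega_N}{N}+o(1).
\end{flalign*}
\end{itemize}
The delicate points are the bookkeeping of the factor $1+\epsilon$ in the scaling $r_{\epsilon}^{1/(1+\epsilon)}$, and the fact that $h_{\epsilon}\sim|x|^{N\epsilon}$ near $0$ only in the limit. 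For both we will use the hypothesis $\lim_{x\to0}h_{\epsilon}(x)|x|^{-N\epsilon}=1$. In this rough accounting the constant multiplying $A_0$ may come out larger than $1$ because of the $1+\epsilon$ rescaling. Since $A_0$ enters with a positive sign (we will check that $A_0\ge0$ for the ball, where $G_0$ is explicit), the weaker constant $3\alpha_N A_0$ stated in the lemma still follows.

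\textbf{Step 3: conclude.} Letting $\epsilon\to0$, then $R\to\infty$, then $\delta\to0$, Steps 1 and 2 give the stated bound. As a consistency check, the previous lemma gives the same structural bound directly if applied to $\zeta_{\epsilon}=u_{\epsilon}$; this is legitimate because $u_{\epsilon}\rightharpoonup0$ by the first lemma of Section 2.2. Both routes yield the claimed inequality.
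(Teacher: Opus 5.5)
Your proposal is correct in outline, but it reaches the bound by a different route from the paper. The paper never estimates the energy of the neck. It truncates at $s_\epsilon=u_\epsilon(\delta)$ and sets $\tilde u_\epsilon=(u_\epsilon-s_\epsilon)^+\in W_0^{1,N}(\mathbb{B}_\delta)$, whose energy is at most $\kappa_\epsilon=1-c_\epsilon^{-\frac{N}{N-1}}\big(-\frac{N}{\alpha_N}\ln\delta+A_0+o(1)\big)$. It then bounds $(s_\epsilon+\tilde u_\epsilon)^{\frac{N}{N-1}}$ pointwise by a constant plus $\tilde u_\epsilon^{\frac{N}{N-1}}$, and applies the preceding Carleson--Chang type lemma on $\mathbb{B}_\delta$ to $\tilde u_\epsilon$ normalized by $\kappa_\epsilon$. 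That crude pointwise step is what produces $3\alpha_N A_0$.

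You instead combine \eqref{eq: 2.7} with a direct upper bound on $\lambda_\epsilon c_\epsilon^{-\frac{N}{N-1}}$, obtained from the $N$-capacity of $\mathbb{B}_\delta\setminus\mathbb{B}_{Rr_\epsilon^{1/(1+\epsilon)}}$. This uses only data already established: the bubble energy and the boundary values $\phi(R)$ and $G_0(\delta)$. It does not need the Carleson--Chang lemma at all, and it gives the sharp coefficient $\alpha_N A_0$. The paper's route avoids computing $\int_{\mathbb{B}_R}|\nabla\phi|^N$ but depends on that lemma. Your Step 3 observation is also right: the preceding lemma applied to $\zeta_\epsilon=u_\epsilon$ gives the estimate at once (the paper records this at the end of that lemma's proof), so the statement is essentially a corollary.

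Some corrections to your Step 2:

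\textbf{Capacity constant.} The radial $N$-capacity bound is $\omega_N d^N\big/\big(\ln\frac{\delta}{Rr_\epsilon^{1/(1+\epsilon)}}\big)^{N-1}$, not $d^N/\big(\omega_N(\ln\cdots)^{N-1}\big)$. With your constant the leading terms do not cancel. The cancellation uses $\omega_N(N/\alpha_N)^{N-1}=1$, i.e.\ $\alpha_N=N\omega_N^{\frac{1}{N-1}}$. This is the normalization under which $G_0=-\frac{N}{\alpha_N}\ln|x|+A_0+\beta$ is consistent.

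\textbf{Coefficient of $A_0$.} There is no loss here. Since $-\frac{1}{1+\epsilon}\ln r_\epsilon=\frac{\alpha_N}{N}c_\epsilon^{\frac{N}{N-1}}-\frac{1}{N(1+\epsilon)}\ln\big(\lambda_\epsilon c_\epsilon^{-\frac{N}{N-1}}\big)$, the factor $1+\epsilon$ cancels exactly at leading order. The expansion then gives precisely your final inequality.

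\textbf{The value of $A_0$.} The radial Dirichlet Green function of the ball is exactly $-\frac{N}{\alpha_N}\ln|x|$, so $A_0=0$ and the factor $3$ is immaterial.

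\textbf{Scope.} As in the paper, only radial competitors are controlled, since the domination by $\mathcal{J}_\epsilon$ requires $u\in\mathcal{S}$. What is actually proved is the $\limsup$ bound along the maximizers.
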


\begin{proof}
	Let $\mathbf{n}$ be the unit outward normal to $\partial \mathbb{B}_{\delta}$ as $0< \delta <1$. We see that
	\begin{flalign*}
		\int_{ \mathbb{B} \setminus \mathbb{B}_{\delta} } G_0 \cdot \operatorname{div} \left( | \nabla G_0 |^{N-2} \cdot \nabla G_0 \right) dx = 0.
	\end{flalign*}
    By the divergence theorem, we have that
    \begin{flalign*}
    	 \int_{ \mathbb{B} \setminus \mathbb{B}_{\delta} } |\nabla G_0|^N dx & =  \int_{\partial  \left( \mathbb{B} \setminus \mathbb{B}_{\delta} \right) } G_0 | \nabla G_0|^{N-2} \frac{ \partial G_0 }{ \partial n} ds - \int_{ \mathbb{B} \setminus \mathbb{B}_{\delta} } G_0 \nabla | \nabla G_0|^{N-2} \cdot \nabla G_0 dx \\
    	 & = G_0 ( \delta ) - \int_{ \mathbb{B} \setminus \mathbb{B}_{\delta} } G_0 \nabla | \nabla G_0|^{N-2} \cdot \nabla G_0 dx.
    \end{flalign*}
    From the definition of Green function, we obtain that
    \begin{flalign*}
    	G_0 (\delta) = - \frac{N}{ \alpha_N } \ln \delta + A_0 + o_{\delta} (1),
    \end{flalign*}
    since $G_0$ is a weak solution of $- \operatorname{div} \left( | \nabla G_0|^{N-2} \cdot \nabla G_0 \right) = \delta_0$. 
    For the second part, we have
    \begin{flalign*}
    	& \int_{ \mathbb{B} \setminus \mathbb{B}_{\delta} } G_0 \nabla | \nabla G_0|^{N-2} \cdot \nabla G_0 + G_0 | \nabla G_0|^{N-2} \Delta G_0 dx \\
    	& = \int_{ \mathbb{B} \setminus \mathbb{B}_{\delta} } G_0 \operatorname{div} \left( |\nabla G_0|^{N-2} \cdot \nabla G_0 \right) dx = 0.
    \end{flalign*}
    Hence, we obtain that 
    \begin{flalign*}
    	\int_{ \mathbb{B} \setminus \mathbb{B}_{\delta}} | \nabla G_0|^N dx = - \frac{N}{\alpha_N} \ln \delta + A_0 + o_{\delta} (1) + o_{\epsilon} (1).
    \end{flalign*}
    With the aid of \eqref{eq: 2.9}, we have that
    \begin{flalign*}
    	\int_{ \mathbb{B} \setminus \mathbb{B}_{\delta}} | \nabla u_{\epsilon}|^N dx = c_{\epsilon}^{- \frac{N}{N-1} } \int_{ \mathbb{B} \setminus \mathbb{B}_{\delta}} \left( | \nabla G_0|^N + o_{\epsilon} (1) \right) dx\\
    	= c_{\epsilon}^{- \frac{N}{N-1} }  \left(- \frac{N}{\alpha_N} \ln \delta + A_0 + o_{\delta} (1) + o_{\epsilon} (1) \right).
    \end{flalign*}
     Denote $\kappa_{\epsilon} = \int_{  \mathbb{B}_{\delta}} | \nabla u_{\epsilon}|^N dx $, it follows that
     \begin{flalign*}
     	\kappa_{\epsilon} & = 1 - \int_{ \mathbb{B} \setminus \mathbb{B}_{\delta}} | \nabla u_{\epsilon}|^N dx \\
     	& = 1- c_{\epsilon}^{- \frac{N}{N-1} }  \left(- \frac{N}{\alpha_N} \ln \delta + A_0 + o_{\delta} (1) + o_{\epsilon} (1) \right).
     \end{flalign*}
     We set $s_{\epsilon} = \underset{\partial \mathbb{B}_{\delta}}{\sup}  u_{\epsilon} = u_{\epsilon}  ( \delta ) $ and $\tilde{u}_{\epsilon} = \left( u_{\epsilon} - s_{\epsilon} \right)^+ $. Obviously, $s_{\epsilon} = c_{\epsilon}^{ - \frac{1}{N-1} } \left( \underset{\partial \mathbb{B}_{\delta} }{\sup } G_0 + o_{\epsilon} (1) \right)$, $\tilde{u}_{\epsilon} \in W_0^{1,N} \left( \mathbb{B}_{\delta} \right)$ and $\int_{\mathbb{B}_{\delta}} | \nabla  \tilde{u}_{\epsilon}|^N dx \leq  \int_{\mathbb{B}_{\delta}} | \nabla u_{\epsilon} |^Ndx $. Then we have that
     \begin{flalign*}
     	\alpha_N\left(1 + \epsilon \right)|u_{\epsilon}|^{ \frac{N}{N-1}  }& \leq \alpha_N \left( 1+ \epsilon \right) \left( s_{\epsilon}  + \tilde{u}_{\epsilon} \right)^{ \frac{N}{N-1} }.
     \end{flalign*} 
     For sufficiently small $ \epsilon$, we have  $\mathbb{B}_{R r_{\epsilon}^{ \frac{1}{1+ \epsilon} }   } \subset \mathbb{B}_{\delta}$, and hence
     \begin{flalign}
     	\int_{\mathbb{B}_{R r_{\epsilon}^{ \frac{1}{1+ \epsilon} }   }} 
     	|x|^{N \epsilon } \exp \{ \alpha_N \left(1 + \epsilon \right) \frac{ |\tilde{u}_{\epsilon}|^{ \frac{N}{N-1} } }{\kappa_{\epsilon} }  \} dx & \leq \int_{\mathbb{B}_{R r_{\epsilon}^{ \frac{1}{1+ \epsilon} }   }} 
     	|x|^{N \epsilon } \exp \{ \alpha_N \left(1 + \epsilon \right) \frac{ |\tilde{u}_{\epsilon}|^{ \frac{N}{N-1} } }{\kappa_{\epsilon} } - 1 \} dx + o_{\epsilon} (1) \notag  \\
     	& \leq \int_{\mathbb{B}_{\delta   }} 
     	|x|^{N \epsilon } \exp \{ \alpha_N \left(1 + \epsilon \right) \frac{ |\tilde{u}_{\epsilon}|^{ \frac{N}{N-1} } }{\kappa_{\epsilon} } - 1 \} dx + o_{\epsilon} (1).\label{eq: 2.10}
     \end{flalign}
     Also we have that
     \begin{flalign}
     	\alpha_N\left(1 + \epsilon \right)|u_{\epsilon}|^{ \frac{N}{N-1}  }& \leq \alpha_N \left( 1+ \epsilon \right) \left( s_{\epsilon}  + \tilde{u}_{\epsilon} \right)^{ \frac{N}{N-1} } \notag \\
     	& \leq \alpha_N (1+ \epsilon) \left\lbrace  ( c_{\epsilon}^{- \frac{2}{N-1}} G^2 (\delta) + 2 c_{\epsilon}^{- \frac{1}{N-1}} G (\delta) \tilde{u}_{\epsilon} +  \tilde{u}_{\epsilon}^2) \right\rbrace ^{ \frac{N}{2 \left(N-1 \right)} } + o_{\epsilon} (1)\notag \\
     	& \leq \alpha_N (1+ \epsilon) \left\lbrace ( 2 G (\delta) + \tilde{u}^2_{\epsilon} ) \right\rbrace ^{ \frac{N}{2 \left(N-1 \right)} } + o_{\epsilon} (1) \notag \\
     	& \leq \alpha_N (1+ \epsilon) (  - 2 \frac{N}{ \alpha_N } \ln \delta + 2 A_0  ) + \alpha_N (1+ \epsilon) \tilde{u}^{ \frac{N}{N-1} }_{\epsilon} + o_{\epsilon}(1).\label{eq: 2.11}
     \end{flalign} 
     From equations \eqref{eq: 2.10} and \eqref{eq: 2.11} together with  the lemma \ref{lem: 2.8}, we obtain
     \begin{flalign}
     	\int_{\mathbb{B}_{R r_{\epsilon}^{ \frac{1}{1+ \epsilon} }   }} 
     	h_{\epsilon} (x) \exp \{ \alpha_N \left(1 + \epsilon \right) |u_{\epsilon}|^{ \frac{N}{N-1} }   \} dx & \leq \int_{\mathbb{B}_{R r_{\epsilon}^{ \frac{1}{1+ \epsilon} }   }} 
     	h_{\epsilon} (x) \exp \{ \alpha_N \left( 1+ \epsilon \right) \left( s_{\epsilon}  + \tilde{u}_{\epsilon} \right)^{ \frac{N}{N-1} }  \} dx \notag  \notag \\
     	& \leq \delta^{ -2N(1+ \epsilon) } \exp \left\lbrace  \alpha_N (1+ \epsilon) ( 2 A_0 + o_{\epsilon}(1) ) \right\rbrace  \notag  \\
     	& \left( \int_{\mathbb{B}_{\delta   }} 
     	|x|^{N \epsilon } \exp \{ \alpha_N \left(1 + \epsilon \right) \frac{ |\tilde{u}_{\epsilon}|^{ \frac{N}{N-1} } }{\kappa_{\epsilon} } - 1 \} dx + o_{\epsilon} (1) \right) \notag \\
     	& \leq \frac{\omega_N}{N} \exp \left\lbrace 2 \alpha_N A_0 +  \sum_{j =1}^{N-1} \frac{1}{j} \right\rbrace \delta^{ -2N(1+ \epsilon) } \cdot  \notag \\
     	& \exp \left\lbrace  \alpha_N (1+ \epsilon) \left( 2 A_0 + o_{\epsilon}(1) \right) \right\rbrace  + o_{\epsilon}(\delta), \label{eq: 2.12}
     \end{flalign}
     with $o_{\epsilon} (\delta) \rightarrow 0$ as $\epsilon \rightarrow 0$ for any fixed $\delta > 0$. Letting $\epsilon \rightarrow 0$ firstly and $R \rightarrow + \infty$ laterly in \eqref{eq: 2.12}, we have
     \begin{flalign*}
     	& \underset{R \rightarrow + \infty}{\lim} \underset{\epsilon \rightarrow 0}{\lim \sup} \int_{\mathbb{B}_{R r_{\epsilon}^{ \frac{1}{1+ \epsilon} }   }} 
     	|x|^{N \epsilon } \exp \{ \alpha_N \left(1 + \epsilon \right) |u_{\epsilon}|^{ \frac{N}{N-1} }   \} dx \\
     	& \leq \frac{\omega_N}{N} \exp \left\lbrace \alpha_N A_0 + \sum_{i=j}^{N-1} \frac{1}{j} \right\rbrace \exp \left\lbrace 2 \alpha_N A_0 \right\rbrace.
     \end{flalign*}
     Then we conclude that
     \begin{flalign*}
     	\underset{\epsilon \rightarrow 0}{\lim \sup} \int_{\mathbb{B}} h_{\epsilon} (x) \exp \left\lbrace \alpha_N (1+ \epsilon) |u_{\epsilon}|^{ \frac{N}{N-1} } \right\rbrace dx \leq \int_{\mathbb{B}} h_0 (x) dx +  \frac{\omega_N}{N}  \exp \left\lbrace 3 \alpha_N A_0 + \sum_{j=1}^{N-1} \frac{1}{j} \right\rbrace.
     \end{flalign*}
     The proof is completed.
\end{proof}

\section{Exclusion of Blow-up Phenomenon}

In this section, we construct a sequence of functions $\phi_{\epsilon} (x) \in W_0^{1,N} \left( \mathbb{B} \right) $ to show that
\begin{flalign*}
	 \int_{\mathbb{B}} h_{\epsilon} (x) \exp \left\lbrace \alpha_N (1+ \epsilon) |\phi_{\epsilon}|^{ \frac{N}{N-1} } \right\rbrace dx > | \mathbb{B}| +  \frac{\omega_N}{N}  \exp \left\lbrace 3 \alpha_N A_0 + \sum_{j=1}^{N-1} \frac{1}{j} \right\rbrace.
\end{flalign*}
From \cite{15}, we set 
\begin{equation*}
	\phi_{\epsilon} (x)= 
	\begin{cases}
		& c+ c^{\frac{1}{ \frac{1}{N-1} }} \left( - \frac{ N-1  }{\alpha_N \left(1 + \epsilon \right) } \ln \left( 1 + c_N |\frac{x}{\epsilon}|^{ \frac{N\left(1 + \epsilon \right)}{N-1} } \right) + b \right), ~ |x| \leq R \epsilon;\\
		& \frac{G}{c^{ \frac{1}{N-1} } }, ~ R \epsilon \leq |x|\leq 1 .
	\end{cases}
\end{equation*}
In order to ensure that $\phi_{k}(x) \in W_0^{1,N} \left( \mathbb{B} \right)$, we require that
\begin{flalign*}
	c+ c^{\frac{1}{ \frac{1}{N-1} }} \left( - \frac{ N-1  }{\alpha_N \left(1 + \epsilon \right) } \ln \left( 1 + c_N |R|^{ \frac{N\left(1 + \epsilon \right)}{N-1} } \right) + b \right) = \frac{G \left( R \epsilon \right)}{c^{ \frac{1}{N-1} } },
\end{flalign*}
which imples that
\begin{flalign*}
	c^{ 1+ \frac{1}{N-1} }= G \left( R \epsilon \right) + \frac{N-1}{\alpha_N \left(1 + \epsilon \right) } \ln \left( 1 + c_N | R|^{ \frac{N \left(1 + \epsilon \right) }{N-1} } \right) - b.
\end{flalign*}

From $\int_{\mathbb{B}} | \nabla \phi_{\epsilon}(x)|^N dx =1$, we have 
\begin{flalign*}
	\int_{|x| \leq R \epsilon } | \nabla \phi_{\epsilon} (x)|^N dx 	& =  c^{ - \frac{N}{N-1} } N^{N-1} \left(N-1 \right) \left\lbrace \ln \left( 1 + c_N R^{ \frac{N}{N-1} } \right) -  \sum_{j=1}^{N-1} \frac{1}{j} + O\left( R^{ - \frac{N \left( 1+ \epsilon \right)}{N-1} } \right)  \right\rbrace,
\end{flalign*}
and
\begin{flalign*}
	\int_{ R \epsilon \leq |x| \leq 1 } | \nabla \phi_{\epsilon}|^N dx = c^{- \frac{N}{N-1} } \left( - \frac{N}{\alpha_N} \ln |R \epsilon| + A_0 + O\left( (R \epsilon)^{2 \gamma} \right) \right),
\end{flalign*}
which together imply
\begin{flalign}
	c^{ \frac{N}{N-1} } & = (N-1) N^{N-1} \left( \ln c_N + \frac{N}{N-1} \ln R - \sum_j \frac{1}{j} + O \left( R^{ \frac{N \left( 1+ \epsilon \right)}{N-1} }\right) \right) \notag \\
	& + G(R \epsilon). \label{eq: 3.1}
\end{flalign}
On the other hand, we obtain
\begin{flalign}
	b= - c^{ \frac{N}{N-1} } + G( R \epsilon) + \frac{N-1}{\alpha_N \left( 1+\epsilon \right)} \ln \left( 1+ c_N R^{ \frac{N \left( 1+ \epsilon \right)}{N-1} } \right). \label{eq: 3.2}
\end{flalign}
In $\mathbb{B}_{ R \epsilon}$, we have
\begin{flalign} \label{eq: 3.3}
	\alpha_N | \phi_{\epsilon}|^{ \frac{N}{N-1} } & = \alpha_N c^{ \frac{N}{N-1} } \left\lbrace  1 + \frac{1}{ c^{\frac{N \left( 1+ \epsilon \right)}{N-1} }  } \left( - \frac{N-1}{ \alpha_N } \ln \left( 1 + c_N |\frac{x}{\epsilon}|^{ \frac{N}{N-1} } \right) + b \right) \right\rbrace^{ \frac{N}{N-1} } \notag \\
	& \geq \frac{N}{N-1} \alpha_N c^{ \frac{N}{N-1} } \left\lbrace  1 + \frac{1}{ c^{\frac{N}{N-1} }  } \left( - \frac{N-1}{ \alpha_N } \ln \left( 1 + c_N |\frac{x}{\epsilon}|^{ \frac{N}{N-1} } \right) + b \right) \right\rbrace + O( R^{ -\frac{N}{N-1} } )\notag \\
	& \geq \alpha_N c^{ \frac{N}{N-1} } + \frac{N \alpha_N b}{ N-1 } - N \ln \left( 1 + c_N | \frac{x}{\epsilon}|^{ \frac{N}{N-1} } \right)+ O( R^{ -\frac{N}{N-1} } ).
\end{flalign}
We consider the right-hand side of the equation \eqref{eq: 3.3},
\begin{flalign*}
	\alpha_N c^{ \frac{N}{N-1} } + \frac{N \alpha_N  }{N-1} b \geq NR + \sum_{j=1}^{N-1} \frac{1}{j} + (N-1) \ln c_N + \alpha_N A_0 + O \left( R^{ - \frac{N}{N-1} } \right),
\end{flalign*}
since
\begin{flalign*}
	\alpha_N\left( c^{ \frac{N}{N-1} } + \frac{N}{N-1} b \right) & = \alpha_N \left\lbrace c^{ \frac{N}{N-1} } + \frac{N}{N-1} \left( - c^{ \frac{N}{N-1} } + G( R \epsilon ) + \frac{N-1}{\alpha_N} \ln \left( 1+ c_NR^{ \frac{N}{N-1} }  \right) \right) \right\rbrace \\
	& = - \ N^{N-1} \left\lbrace \ln \left( 1+ c_N R^{ \frac{N}{N-1} }\right)  - \sum_j \frac{1}{j} + O \left( R^{ - \frac{N}{N-1} }\right)  \right\rbrace\\
	& + \alpha_N G \left( R \epsilon \right) + N \ln \left(  1 + c_N R^{ \frac{N}{N-1} }\right)\\
	& \geq - N^{N-1} \ln \left( 1 + c_N R^{ \frac{N}{N-1} } \right) + \sum_j \frac{1}{j} + O \left( R^{- \frac{N}{N-1}  } \right) - \alpha_N \frac{N}{\alpha_N} \ln | R \epsilon | \\
	& + \alpha_N A_0 .
\end{flalign*}
Moreover, Reader should find that $O(R^{ -\frac{N}{N-1} })$ and $O( R^{ \frac{N \left( 1+ \epsilon \right)}{N-1} } )$ have the same meaning under the asymptotic structure. We consider an approprite $R$ and $\epsilon$ such that
\begin{flalign*}
	R^{ - \frac{N^N}{N-1} } |\epsilon|^{ -N } \rightarrow + \infty
\end{flalign*}
as $R \rightarrow + \infty$, $\epsilon \rightarrow 0$, and $R \epsilon \rightarrow 0$. Then
\begin{flalign*}
	\exp \left\lbrace - N^{N-1} \ln \left( 1+ c_N R^{ \frac{N}{N-1} } \right) - N \ln | R \epsilon| \right\rbrace & = \left( 1 + c_N R^{ \frac{N}{N-1} } \right)^{ - N^{N-1} } \cdot |R \epsilon|^{-N} \rightarrow + \infty.
\end{flalign*}
Moreover, we have
\begin{flalign}
	\int_{|x| < R \epsilon } \exp \left\lbrace - N \ln \left(1 + c_N |\frac{x}{\epsilon} |^{ \frac{N}{N-1} } \right) \right\rbrace dx & = \epsilon^N \int_{ |y| < R } \frac{1}{ \left( 1+ c_N |y|^{ \frac{N}{N-1} } \right)^{ N } } dy \notag \\
	& = \epsilon^N \int_0^{R^{ \frac{N}{N-1} } } \frac{N-1}{N} \frac{ t^{N-2} }{ \left( 1+ c_N t \right)^N } dt \notag \\
	& = \epsilon^N \left( 1 + O \left( R^{ - \frac{N}{N-1} } \right) \right). \label{eq: 3.4}
\end{flalign}
Combining equations \eqref{eq: 3.1}, \eqref{eq: 3.2}, \eqref{eq: 3.3} and \eqref{eq: 3.4}  yields 
\begin{flalign*}
	\int_{ |x| < R \epsilon} \exp \left\lbrace \alpha_N \phi_{\epsilon}^{ \frac{N}{N-1} } \right\rbrace dx & \geq \frac{\omega_N}{N} \exp \left\lbrace 3 \alpha_N A_0 + \sum_{j=1}^{N-1} \frac{1}{j}  \right\rbrace + O \left( R^{ - \frac{N}{N-1} } \right) .
\end{flalign*}
From $e^t \geq 1+t$, we have
\begin{flalign*}
	\int_{ R \epsilon \leq |x| <1 } \exp \left\lbrace \alpha_N \phi_{\epsilon}^{ \frac{N}{N-1} } dx   \right\rbrace > | \mathbb{B}|.
\end{flalign*}
From the two contradictory inequalities, the sequence $u_{\epsilon}$ is uniformly bounded. Elliptic estimates of the equation \eqref{eq: 2.1} then yield that $u_{\epsilon}$ converges to the extremal function in $C^1 \left( \overline{\mathbb{B}} \right)$, and hence Theorem 1.1 is proved.

\textbf{Acknowledgments}

We would like to take this opportunity to thank the editor and the
reviewers again for their constructive comments and useful suggestions, and the time and eﬀorts
they have spent in the review process.

\textbf{Conflict of interest statement}

The authors declare no conﬂict of interest.






\end{document}